\newcommand{\Cc}{\mathbb{C}} 
\newcommand{\Rr}{\mathbb{R}}
\newcommand{\Qq}{\mathbb{Q}}
\renewcommand {\le}{\leqslant}
\renewcommand {\ge}{\geqslant}
\newcommand{\Kk}{\mathbb{K}}
\newcommand{\defi}[1]{\emph{#1}}
\newcommand{\dist}{\operatorname{dist}}
\theoremstyle{plain}
\newtheorem{theorem}{Theorem}            
\newtheorem*{theorem1bis}{Theorem 1'}            
\newtheorem{lemma}[theorem]{Lemma}       
\theoremstyle{remark}
\newtheorem*{remark*}{Remark}  
\newcommand{\myfigure}[2]{
	\begin{center}
		\small
		\tikzstyle{every picture}=[scale=1.0*#1]
		#2
\end{center}}
\title{Bilipschitz equivalence of polynomials}
\author{Arnaud Bodin}
\email{arnaud.bodin@univ-lille.fr}
\address{Universit\'e de Lille, CNRS, Laboratoire Paul Painlev\'e, 59000 Lille, France}
\subjclass[2010] {Primary 58K60 ; Sec. 12E05}
\keywords{Bilipschitz geometry, Polynomials, Moduli.}
\thanks{{\it Acknowledgment}. 
This work was supported by the ANR project ``LISA'' (ANR-17-CE40–0023-01).
}
\date{\today}
\begin{document}

\begin{abstract}
We study a family of two variables polynomials having moduli up to bilipschitz equivalence: two distinct polynomials of this family are not bilipschitz equivalent. However any level curve of the first polynomial is bilipschitz equivalent to a level curve of the second.
\end{abstract}

\maketitle


\section{Global bilipschitz equivalence}

Let $\Kk$ be $\Rr$ or $\Cc$.
For polynomial maps $f,g : \Kk^n \to \Kk$ we introduce two notions of bilipschitz equivalence: a level equivalence (a hypersurface $(f=c)$ is sent to a hypersurface $(g=c')$) and a global equivalence (any level $(f=c)$ is sent to another level $(g=c')$).

\begin{itemize}
  \item $\Kk^n$ is endowed with the Euclidean canonical metric. 	
	
  \item A map $\Phi : \Kk^n \to \Kk^n$ is \defi{Lipschitz} if there exists $K>0$ such that for all $x,y \in \Kk^n$:
  $$\| \Phi(x) - \Phi(y) \| \le K \|x-y\|.$$
  
  \item A map $\Phi : \Kk^n \to \Kk^n$ is \defi{bilipschitz} if it is a homeomorphism, Lipschitz and $\Phi^{-1}$ is also Lipschitz. Equivalently, $\Phi$ is bijective and  there exists $K>0$ such that
  $\frac1K \|x-y\| \le \| \Phi(x) - \Phi(y) \| \le K \|x-y\|$.
  
  \item Two sets $\mathcal{C}$ and $\mathcal{C}'$ of $\Kk^2$ are \defi{bilipschitz equivalent} if there exists a bilipschitz map $\Phi : \Kk^n \to \Kk^n$ such that  $\Phi(\mathcal{C}) = \mathcal{C}'$.
  
  \item Two functions $f,g : \Kk^n \to \Kk$ are \defi{right-bilipschitz equivalent} if there exists a bilipschitz map $\Phi : \Kk^n \to \Kk^n$ such that $g \circ \Phi = f$.
  
  \item Two functions $f,g : \Kk^n \to \Kk$ are \defi{left-right-bilipschitz equivalent} if there exist a bilipschitz map $\Phi : \Kk^n \to \Kk^n$ and a bilipschitz map $\Psi : \Kk \to \Kk$   such that $g \circ \Phi = \Psi \circ f$.  
\end{itemize}

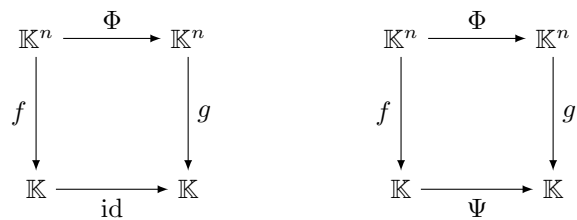
\begin{figure}[h]
\myfigure{0.4}{\begin{tikzpicture}[scale=1]

\begin{scope}
\node (A) at (0,0)  {$\Kk^n$};
\node (B) at (5,0)  {$\Kk^n$};
\node (AA) at (0,-5)  {$\Kk$};
\node (BB) at (5,-5)  {$\Kk$};
\draw[->,>=latex](A) -- (B) node[midway,above] {$\Phi$};
\draw[->,>=latex](AA) -- (BB) node[midway,below] {$\mathrm{id}$};
\draw[->,>=latex](A) -- (AA) node[midway,left]{$f$};
\draw[->,>=latex](B) -- (BB) node[midway,right]{$g$};
\end{scope}

\begin{scope}[xshift=12cm]
\node (A) at (0,0)  {$\Kk^n$};
\node (B) at (5,0)  {$\Kk^n$};
\node (AA) at (0,-5)  {$\Kk$};
\node (BB) at (5,-5)  {$\Kk$};
\draw[->,>=latex](A) -- (B) node[midway,above] {$\Phi$};
\draw[->,>=latex](AA) -- (BB) node[midway,below] {$\Psi$};
\draw[->,>=latex](A) -- (AA) node[midway,left]{$f$};
\draw[->,>=latex](B) -- (BB) node[midway,right]{$g$};
\end{scope}

\end{tikzpicture}}		

\caption{Two commutative diagrams. On the left: right-bilipschitz equivalence. On the right: left-right-bilipschitz equivalence.}
\end{figure}

Remarks:
\begin{itemize}
  \item A map $\Phi : \Kk^n \to \Kk^n$ can be $C^1$ but not Lipschitz. Hence (bi-)Lipschitz is \emph{not} an intermediate case between smooth and continuous. This is due to the non-compactness: for instance $\Phi : \Rr \to \Rr$, $x \mapsto x^2$ is $C^1$ but not Lipschitz.
  
  \item For similar reasons an algebraic automorphism of $\Kk^n$ does not necessarily provide a bilipschitz equivalence. For instance $f(x,y) = y$ and $g=y+x^2$ are algebraically equivalent using the map $\Phi : (x,y) \mapsto (x,y-x^2)$, but $\Phi$ is not bilipschitz.
\end{itemize}

It is clear that bilipschitz equivalence implies topological equivalence (i.e.\ when $\Phi$ and $\Psi$ are only homeomorphisms). The main question is: does topological equivalence implies bilipschitz equivalence? The answer is negative.

We will actually prove more. A theorem of Fukuda asserts that in a family of polynomials there is only a finite number of different types, up to topological equivalence, see \cite{Fu}, \cite{BT}. However the following theorem proves that the family of polynomials $f_s(x,y) = x(x^2y^2 -sxy-1)$ has moduli for bilipschitz equivalence, i.e.\ any two polynomials in this family are not right-bilipschitz equivalent. 

\begin{theorem}
	\label{th:main}
	Consider the family of polynomial in $\Kk[x,y]$:
	$$f_s(x,y) = x(x^2y^2 -sxy-1).$$
	\begin{itemize}
		\item $\Kk=\Rr$. 
		Any two polynomials $f_s$ and $f_{s'}$ with $s,s' \in \Rr$, $s\neq s'$ are not right-bilipschitz equivalent.
		However the special levels $(f_0=0)$ and $(f_1=0)$ are bilipschitz equivalent
		and the generic levels $(f_0=1)$ and $(f_1=1)$ are bilipschitz equivalent.
		
		\item $\Kk=\Cc$. 
		Fix $s\in\Cc$, with $s^2+3\neq0$. For all but finitely many $s'\in \Cc$, $f_s$ and $f_{s'}$ are not right-bilipschitz equivalent. However, if $s^2+4\neq0$ and $s'^2+4\neq0$, the polynomials $f_s$ and $f_{s'}$ are topologically equivalent.
	\end{itemize}
\end{theorem}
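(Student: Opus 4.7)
The starting point is the factorisation $f_s(x,y) = x(xy - \alpha_s)(xy - \beta_s)$, where $\alpha_s, \beta_s$ are the two roots of $u^2 - su - 1 = 0$, so $\alpha_s \beta_s = -1$; over $\Rr$ one root is positive and the other negative. The zero level $(f_s = 0)$ is then the union of the $y$-axis and the two hyperbolas $\{xy = \alpha_s\}$ and $\{xy = \beta_s\}$.

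For the two bilipschitz equivalences of level sets I would exhibit explicit piecewise bilipschitz maps. For $(f_0 = 0)$ and $(f_1 = 0)$ over $\Rr$, fix the $y$-axis and in each half-plane $\{\pm x > 0\}$ rescale the $y$-coordinate by a bounded factor so as to send each hyperbola of $(f_0=0)$ onto the corresponding hyperbola of $(f_1 = 0)$, interpolating with the identity on a strip $\{|x| \leq 1\}$. For $(f_0 = 1)$ and $(f_1 = 1)$, both curves have three unbounded real connected components with matching branches at infinity: two hyperbolic branches $y \sim \alpha_s/x$, $y \sim \beta_s/x$ at $[1:0:0]$, and one Puiseux branch $x \sim y^{-2/3}$ at $[0:1:0]$; a similar piecewise construction tailored to each branch realises the equivalence.

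For the non-equivalence, suppose $\Phi$ is a bilipschitz map with $f_{s'} \circ \Phi = f_s$; then $\Phi$ sends each level $(f_s = c)$ bilipschitzly onto $(f_{s'} = c)$. The $y$-axis is the unique one-component branch of $(f_s = 0)$ and is preserved by $\Phi$, which also sends the two hyperbolas to the two hyperbolas. The central computation is that $|\nabla f_s|(0,y) = 1$, while along $\{xy = \alpha_s\}$ one finds $\partial_x f_s(x_0, \alpha_s/x_0) = \alpha_s^2 + 1$ and $\partial_y f_s(x_0, \alpha_s/x_0) = x_0^2(2\alpha_s - s) = \sqrt{s^2+4}\, x_0^2$, giving $|\nabla f_s|(x_0, \alpha_s/x_0) \sim \sqrt{s^2+4}\, x_0^2$ as $|x_0| \to \infty$. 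Since the normal distance between $(f_s = 0)$ and $(f_s = \varepsilon)$ at $p$ equals $\varepsilon/|\nabla f_s(p)| + O(\varepsilon^2)$, the bilipschitz distortion of $\Phi$ yields $|\nabla f_{s'}(\Phi(p))| \asymp |\nabla f_s(p)|$; combined with preservation of the asymptotic distances between the three branches of $(f_s = 0)$ at a common height $y \to \infty$, this gives an asymptotic system of relations between $\alpha_s, \beta_s, \alpha_{s'}, \beta_{s'}$ and the factors $\sqrt{s^2+4}$, $\sqrt{s'^2+4}$.

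The main obstacle is that these comparisons hold only up to the bilipschitz constant of $\Phi$, so they do not yield $s = s'$ on their own. My plan is to exploit the simultaneous preservation of every level: the Puiseux expansion $y = \alpha_s/x + c/(\sqrt{s^2+4}\, x^2) + O(1/x^3)$ of the hyperbolic branches of $(f_s = c)$ shows that the second-order coefficient is linear in $c$ with factor $1/\sqrt{s^2+4}$, so the ratio of second-order coefficients across two distinct levels $c \ne c'$ is an exact $s$-dependent invariant (the bilipschitz distortion cancels). Iterating with higher-order Puiseux coefficients produces, in the limit, a polynomial identity in $s, s'$ that forces $s = s'$ over $\Rr$, and that has only finitely many solutions $s'$ for each fixed $s$ over $\Cc$; the exceptional locus $s^2 + 3 = 0$ reflects a degeneration at which this identity becomes trivial. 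The topological equivalence over $\Cc$ follows from Fukuda's theorem, since the family is smooth and the branches at infinity are of constant Puiseux type.
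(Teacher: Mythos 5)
Your construction of the level equivalence breaks down exactly in the delicate region. You propose to fix the $y$-axis and rescale the $y$-coordinate so as to carry $(xy=1)$ onto $(xy=\sigma)$, interpolating with the identity on the strip $\{|x|\le 1\}$; but the hyperbola enters that strip (its points with $|y|\ge 1$), so the identity cannot be used there, and a vertical rescaling cannot be used there either: at small $|x|$ the vertical displacement needed to reach the target hyperbola is $\sim(\sigma-1)/|x|$, while the distance to the pointwise-fixed axis is $\sim|x|$, so no Lipschitz bound survives. The correct move near the axis is to displace \emph{horizontally}, $(x,y)\mapsto(\sigma x,y)$ (a small perturbation when $|x|$ is small), and to switch to the vertical rescaling $(x,y)\mapsto(x,\sigma y)$ only far from the axis, interpolating the pair $(a,b)$ with $ab=\sigma$ in between; this is the content of the paper's Lemma~\ref{lem:level}, and the same issue recurs for $(f_0=1)$ near the branch at $(0:1:0)$.

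The non-equivalence argument is where the real gap lies. You correctly observe that gradient magnitudes and inter-branch distances are only preserved up to the bilipschitz constant, so they cannot separate $s$ from $s'$; but the replacement you propose --- that ratios of second-order Puiseux coefficients across two levels are exact invariants because ``the distortion cancels'', iterated to higher order --- is asserted, not proved, and it is unclear how to extract an exact identity from asymptotic expansions when $\Phi$ scrambles the coordinates in which those expansions are written. What actually works is different: the invariant is the ratio $f_s(B)/f_s(D)$ of the values of $f_s$ at the two points of the polar curve $\Gamma_s:(\partial_x f_s=0)$ (branches $xy=\alpha_s$, $xy=\beta_s$ with $3z^2-2sz-1=0$ --- note these are not your roots of $z^2-sz-1$, and $s^2+3$ is precisely the discriminant of this polar equation, not a degeneration of your identity) at a common height $1/t$ near $(0:1:0)$. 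This ratio is exactly preserved because $f_{s'}\circ\Phi=f_s$ preserves values on the nose; the work is geometric: $B$ and $D$ lie within $O(t)$ of the axis point $C=(0,1/t)$, the axis component is preserved, so $\tilde B,\tilde D$ lie within $O(Kt)$ of $\tilde C$, and one must then replace $\tilde B$ (which has no reason to lie on $\Gamma_{s'}$) by the point $B'$ of $(f_{s'}=c)$ of maximal $y$ in that ball, which does lie on $\Gamma_{s'}$. This pins the parameters on the two branches of $\Gamma_{s'}$ to agree to second order and yields the exact equality of the rational invariants, which is strictly monotone in $s$. None of these steps appears in your sketch. Finally, Fukuda's theorem only gives \emph{finiteness} of topological types in the family; to get that all $f_s$ (for $s^2+4\neq0$) are topologically equivalent the paper invokes a global $\mu$-constant theorem after computing the Milnor numbers at infinity.
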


This is a version at infinity of a result by Henry and Parusi\'nski, \cite{HP}. Our polynomials $f_s$ have only one special level $(f_s=0)$ which plays the role of the singular level of the local examples of \cite{HP}. We recall that for a polynomial map $f :\Kk^n \to \Kk$ there is a notion of \defi{generic levels} $(f=c)$ and a finite number of \defi{special levels} whose topology is not the generic one. Special levels can be due to the presence of a singular point or to singularity at infinity as this the case in our examples. We will in fact prove a non bilipschitz equivalence ``at infinity'', after defining that two functions are bilipschitz equivalent at infinity if they are bilipschitz equivalent outside some compact sets.

\bigskip
\emph{Acknowledgments.} I thank Vincent Grandjean, Anne Pichon and Patrick Popescu-Pampu for their encouragements and the referees for their comments.

\section{Levels are bilipschitz equivalent}

Lemmas \ref{lem:level} and \ref{lem:levelgen} in this section will prove the bilipschitz real equivalence of theorem \ref{th:main}.
Let 
$$f_s(x,y) = x(x^2y^2 -sxy-1)$$
which, in this section, is considered as a family of polynomials in $\Rr[x,y]$.
\begin{lemma}
\label{lem:level}
The levels $(f_0=0)$ and $(f_1=0)$ are bilipschitz equivalent, that is to say	
there exists a bilipschitz map $\Phi : \Rr^2 \to \Rr^2$ such that
$\Phi((f_0=0)) = (f_1=0)$.
\end{lemma}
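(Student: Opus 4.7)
The plan is to construct an explicit bilipschitz map $\Phi:\Rr^2\to\Rr^2$ with $\Phi((f_0=0))=(f_1=0)$. I first identify
\[
(f_0=0)=\{x=0\}\cup\{xy=1\}\cup\{xy=-1\}, \qquad (f_1=0)=\{x=0\}\cup\{xy=\alpha\}\cup\{xy=\beta\},
\]
where $\alpha,\beta=\tfrac{1\pm\sqrt5}{2}$ are the roots of $u^2-u-1$, so $\alpha+\beta=1$ and $\alpha\beta=-1$. Each level set is the $y$-axis together with two hyperbolas of the foliation $\{xy=c\}$.

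Two complementary bilipschitz formulas suggest themselves, each valid on part of $\Rr^2$. On the region $\{|x|\ge 1\}$, I use the vertical shear
\[
\Phi_V(x,y)=\Bigl(x,\,\tfrac{\sqrt5}{2}y+\tfrac{1}{2x}\Bigr),
\]
which sends $(x,\pm 1/x)$ to $(x,\alpha/x),(x,\beta/x)$ and is bilipschitz there since $1/(2x)$ has bounded derivative. On the region $\{|y|\ge 1\}$, I use the horizontal piecewise-linear map $\Phi_H(x,y)=(\varphi(x,y),y)$ where $\varphi(x,y)=\alpha x$ if $xy\ge 0$ and $\varphi(x,y)=-\beta x$ if $xy\le 0$; this is bilipschitz, preserves the $y$-axis, and sends the two hyperbolas of $(f_0=0)$ onto those of $(f_1=0)$ correctly.

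I then glue: use $\Phi_V$ on the horizontal arms $\{|x|\ge 1,\,|y|\le 1\}$, $\Phi_H$ on the vertical arms $\{|x|\le 1,\,|y|\ge 1\}$, and define $\Phi$ freely on the central square $[-1,1]^2$ and the four outer corners $\{|x|\ge 1,\,|y|\ge 1\}$. The central square contains only the segment $\{0\}\times[-1,1]$ of the level set; the outer corners contain no piece of either level set except the boundary points $(\pm 1,\pm 1)$. So in these ``free'' regions the choice of $\Phi$ only has to match the prescribed boundary values from the arms and preserve the $y$-axis segment.

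The hard part will be reconciling the boundary values at the four corners $(\pm 1,\pm 1)$: at $(1,1)$ the map $\Phi_V$ gives image $(1,\alpha)$ while $\Phi_H$ gives $(\alpha,1)$, and similarly at the other corners. Both candidate images lie on the same target hyperbola $xy=\alpha$, so there is no topological obstruction. The remedy is to compose $\Phi_V$ in a small neighborhood of $(1,1)$ with a bilipschitz map of $\Rr^2$ that slides $(1,\alpha)$ to $(\alpha,1)$ along $xy=\alpha$ and is the identity outside a small neighborhood of the target hyperbola; this keeps the hyperbola $xy=1$ set-wise mapped to $xy=\alpha$ and makes the corner values agree. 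A bilipschitz interpolation in the central square matching these adjusted boundary values and fixing the $y$-axis segment then completes the construction. The main technical point is obtaining a uniform Lipschitz constant across all pieces of the gluing, which is controlled because the interpolations take place in compact regions.
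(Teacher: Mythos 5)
Your formulas are the right ones, and on the level curves they reproduce the paper's map exactly: near the $y$-axis one rescales $x$ by $\alpha$ or $-\beta$, and near the $x$-axis your shear $y\mapsto\tfrac{\sqrt5}{2}y+\tfrac{1}{2x}$ restricts to $y\mapsto \alpha y$ on $xy=1$ and $y\mapsto -\beta y$ on $xy=-1$ (the paper's $(a,b)=(1,\sigma)$ and $(1,\tau)$ regimes). The gap is in the gluing, and it is more serious than a mismatch of two corner values. The images of your two regimes overlap in an \emph{open set}: $\Phi_H$ maps the vertical arm $[-1,1]\times[1,\infty)$ onto $[\beta,\alpha]\times[1,\infty)$, while $\Phi_V$ maps the horizontal arm $[1,\infty)\times[-1,1]$ onto $\{(x,v): x\ge 1,\ |v-\tfrac{1}{2x}|\le\tfrac{\sqrt5}{2}\}$, whose upper boundary at $x=1$ reaches $v=\alpha>1$. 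For instance $(1,\tfrac32)=\Phi_V(1,\tfrac{2}{\sqrt5})=\Phi_H(\alpha^{-1},\tfrac32)$ is the image of two distinct domain points, one in each arm, so the glued map fails to be injective no matter how you define it on the ``free'' regions. The same phenomenon on the curve itself is that the images of the two halves of the branch $\{xy=1,\ x>0\}$ both cover the arc of $xy=\alpha$ between $(1,\alpha)$ and $(\alpha,1)$.

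Your slide only addresses the single corner value and cannot repair this: a homeomorphism supported in a small neighborhood of the hyperbola cannot remove an open overlap of the ambient images, and even on the level of boundary data it leaves an inconsistency --- after sliding, the image of the square's right edge still crosses the line $y=1$ at $(1,1)$, which is an interior point of the image $[\beta,\alpha]\times\{1\}$ of the top edge, so the four edge images do not form a Jordan curve and no homeomorphic extension to the central square exists. (Also, the four outer corner regions are unbounded, so the claim that all interpolations happen in compact regions is not available there.) What is needed is a genuine transition zone in which the map interpolates between the two regimes while keeping $xy=1$ inside $xy=\alpha$ throughout; the paper arranges this by prescribing the map on the curve as $(x,y)\mapsto(ax,by)$ with $(a,b)$ varying continuously from $(\alpha,1)$ to $(1,\alpha)$ subject to $ab=\alpha$ over the range $\tfrac12\le|x|\le2$ (and similarly with $ab=-\beta$ on $xy=-1$), and only afterwards extending off the curve by a map that is the identity outside a tubular neighborhood. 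If you insert such a transition between your $\Phi_H$ and $\Phi_V$ regimes, your construction goes through and the rest of your verification (the asymptotic bilipschitz estimates on each arm) is sound.
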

In other words, the (unique) special fibers of $f_0$ and $f_1$ are bilipschitz equivalent.

\begin{proof}
~ 

\textbf{Definition of $\Phi$.}

\begin{itemize}
	\item Let $\sigma = \frac{\sqrt5+1}{2}$ be the positive root of $z^2-z-1=0$.
	Let $\tau = \frac{\sqrt5-1}{2}$ be the positive root of $z^2+z-1=0$.

	\item We define a map $\Phi : \Rr^2 \to \Rr^2$ by the following formulas:
	\begin{itemize}
		\item For $(x,y) \in (xy=1)$ we define:
		$$\Phi(x,y) = (ax,by) \quad  \text{ with } ab = \sigma,$$
		such that $(a,b)$ depends on $(x,y)$ in the following way:
		$$
		\left\{
		\begin{array}{rl}
		(a,b) = (\sigma,1) & \text{ if } |x| \le \frac12 \\
		(a,b) = (1,\sigma) & \text{ if } |x| \ge 2 \\
		\end{array}
		\right.$$
		and extended to a smooth map for $\frac12  \le |x| \le 2$ so that the relation
		$ab=\sigma$ is always satisfied on $(xy=1)$.

		\item For $(x,y) \in (xy=-1)$ we similarly define $\Phi(x,y) = (ax,by)$ with $ab = \tau$, and $(a,b) = (\tau,1)$ for  $|x| \le \frac12$, $(a,b) = (1,\tau)$ for  $|x| \ge 2$ and extended in a smooth map for $\frac12  \le |x| \le 2$.
		
		\item $\Phi(0,y)=(0,y)$ for all $y\in \Rr$.
		
		\item $\Phi(x,y)=(x,y)$ for $(x,y)$ outside a neighborhood $\mathcal{N}$ of radius $1$ of
		$(xy=1) \cup (xy=-1)$.
		
		\item $\Phi$ is extended on $\mathcal{N}$ to a bilipschitz homeomorphism $\Phi : \Rr^2 \to \Rr^2$.  
	\end{itemize}

	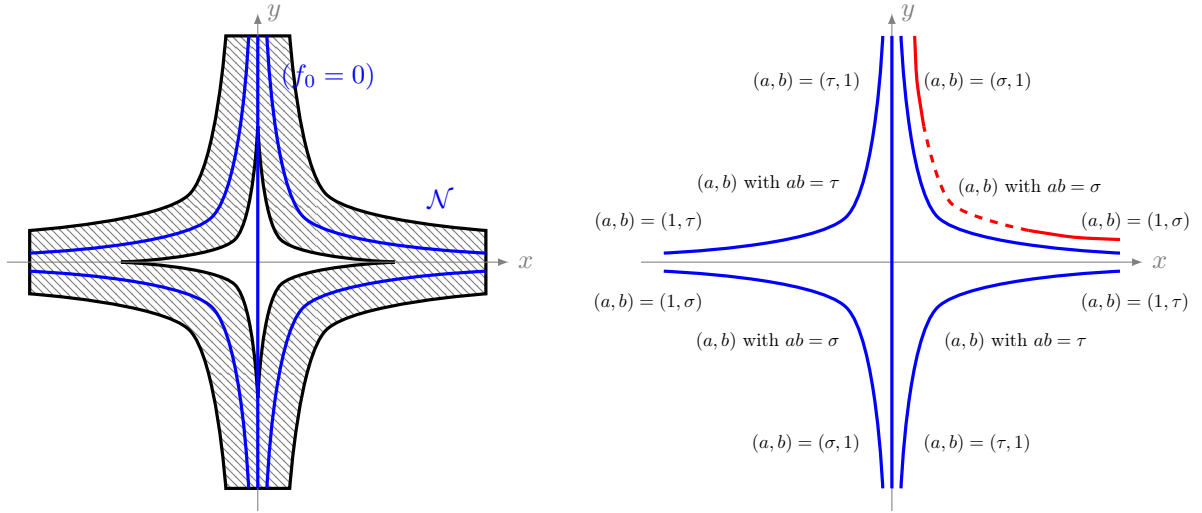
\begin{figure}[h]
	\begin{minipage}{0.45\textwidth}
		\myfigure{0.6}{\begin{tikzpicture}[scale=1]
\usetikzlibrary{patterns}

\def\bigstar{
plot [smooth] coordinates {(5,0.2+0.5)  (1+0.5,1+0.5)   (0.2+0.5,5) } -- 
plot [smooth] coordinates {(-0.2-0.5,5)  (-1-0.5,1+0.5)   (-5,0.2+0.5) } -- 
plot [smooth] coordinates {(-5,-0.2-0.5)  (-1-0.5,-1-0.5)   (-0.2-0.5,-5) } --
plot [smooth] coordinates { (0.2+0.5,-5) (1+0.5,-1-0.5)  (5,-0.2-0.5)  } -- cycle
}
\def\smallstar{
plot [smooth] coordinates {(3,0)  (0.5,0.5)   (0,3) } -- 
plot [smooth] coordinates {(0,3)  (-0.5,0.5)   (-3,0) } -- 
plot [smooth] coordinates {(-3,0)  (-0.5,-0.5)   (0,-3) } -- 
plot [smooth] coordinates {(0,-3)  (0.5,-0.5)   (3,0) } -- cycle
}
\draw[fill=black!15, even odd rule,pattern=north west lines, pattern color=gray] \bigstar \smallstar; 
\draw[very thick, black]  \bigstar;
\draw[very thick, black]  \smallstar;

\draw[->,>=latex,gray] (0,-5.5)--(0,5.5) node[right]{$y$};
\draw[->,>=latex,gray] (-5.5,0)--(5.5,0) node[right]{$x$};

\draw[very thick, blue] (0,-5) -- (0,5);
\draw[very thick, blue] plot [smooth] coordinates {(5,0.2)  (1,1)   (0.2,5) };
\draw[very thick, blue] plot [smooth] coordinates {(-5,0.2)  (-1,1)   (-0.2,5) };
\draw[very thick, blue] plot [smooth] coordinates {(5,-0.2)  (1,-1)   (0.2,-5) };
\draw[very thick, blue] plot [smooth] coordinates {(-5,-0.2)  (-1,-1)   (-0.2,-5) };

\node[blue,right] at (0.3,4.1) {$(f_{0}=0)$};
\node[blue,above] at (4,1) {$\mathcal{N}$};

\end{tikzpicture}}	
	\end{minipage}
	\qquad	
	\begin{minipage}{0.45\textwidth}
		\myfigure{0.6}{\begin{tikzpicture}[scale=1]

\draw[->,>=latex,gray] (0,-5.5)--(0,5.5) node[right]{$y$};
\draw[->,>=latex,gray] (-5.5,0)--(5.5,0) node[right]{$x$};

\draw[very thick, blue] (0,-5) -- (0,5);
\draw[very thick, blue] plot [smooth] coordinates {(5,0.2)  (1,1)   (0.2,5) };
\draw[very thick, blue] plot [smooth] coordinates {(-5,0.2)  (-1,1)   (-0.2,5) };
\draw[very thick, blue] plot [smooth] coordinates {(5,-0.2)  (1,-1)   (0.2,-5) };
\draw[very thick, blue] plot [smooth] coordinates {(-5,-0.2)  (-1,-1)   (-0.2,-5) };


\draw[very thick, red] plot [smooth] coordinates {(5,0.5)  (4,0.55)   (3,0.7) };
\draw[very thick, red,dashed] plot [smooth] coordinates {(3,0.7)  (1.3,1.3)   (0.7,3) };
\draw[very thick, red] plot [smooth] coordinates {(0.5,5)  (0.55,4)   (0.7,3) };

\node[above right,scale=0.7] at (4,0.55) {$(a,b)=(1,\sigma)$};
\node[above right,scale=0.7] at (1.3,1.3) {$(a,b)$ with $ab=\sigma$};
\node[right,scale=0.7] at (0.55,4) {$(a,b)=(\sigma,1)$};

\node[above left,scale=0.7] at (-4,0.55) {$(a,b)=(1,\tau)$};
\node[above left,scale=0.7] at (-1,1.4) {$(a,b)$ with $ab=\tau$};
\node[left,scale=0.7] at (-0.55,4) {$(a,b)=(\tau,1)$};

\node[below left,scale=0.7] at (-4,-0.55) {$(a,b)=(1,\sigma)$};
\node[below left,scale=0.7] at (-1,-1.4) {$(a,b)$ with $ab=\sigma$};
\node[left,scale=0.7] at (-0.55,-4) {$(a,b)=(\sigma,1)$};

\node[below right,scale=0.7] at (4,-0.55) {$(a,b)=(1,\tau)$};
\node[below right,scale=0.7] at (1,-1.4) {$(a,b)$ with $ab=\tau$};
\node[right,scale=0.7] at (0.55,-4) {$(a,b)=(\tau,1)$};

\end{tikzpicture}}		
	\end{minipage}	
	
	\caption{The definition of $\Phi$. Left: the level, a neighborhood of the level. Right: the values $(a,b)$ for the definition of $\Phi(x,y) = (ax,by)$ on the level.}
	\label{fig0506}
	\end{figure}

	\item The only point to prove is that the formulas actually yield a bilipschitz map around the axis.
	For instance let $(x_1,y_1) \in (xy=1)$ with $x_1>2$, so that $\Phi(x_1,y_1) = (x_1,\sigma y_1)$ and
	$(x_2,y_2) \in (xy=-1)$ with $x_2>2$ and $\Phi(x_2,y_2) = (x_2,\tau y_2)$.
	Then
	\begin{align*}
	\|\Phi(x_1,y_1) - \Phi(x_2,y_2)\| 
	& = \| (x_1-x_2,\sigma y_1-\tau y_2)\| \\
	& \le \| (x_1-x_2,2\sigma (y_1-y_2)\| \\
	& \le 2\sigma\| (x_1-x_2,y_1-y_2)\|
	\end{align*}
	(using that $y_1 - y_2 = |y_1| + |y_2|$). A similar bound holds for $\Phi^{-1}$ on this branch. 
	
	Then $\Phi : \Rr^2 \to \Rr^2$ is a bilipschitz homeomorphism.
	
\end{itemize}

\bigskip

\textbf{Equivalence.}

\begin{itemize}
	\item Let $f(x,y) =f_0(x,y) = x(x^2y^2-1)$ and
	$g(x,y)=f_1(x,y)=x(x^2y^2-xy-1)$.

	\item By definition of $\Phi$, $\Phi(0,y)=(0,y)$ so that the component
	$(x=0) \subset (f=0)$ is sent by $\Phi$ to $(x=0) \subset (g=0)$.
	
	\item Let $(x,y) \in (xy=1) \subset (x^2y^2=1) \subset (f=0)$.
	For such $(x,y)$, $\Phi(x,y)=(ax,by)$ with $ab=\sigma$.
	
	\item Let $\tilde g(x,y) = x^2y^2-xy-1$:
	$$\tilde g \circ \Phi(x,y) 
	= \tilde g(ax,by)
	= a^2b^2x^2y^2-abxy-1
	= \sigma^2(xy)^2-\sigma xy -1.$$
	As $xy=1$ we get:
	$$\tilde g \circ \Phi(x,y) = \sigma^2-\sigma -1=0,$$
	by definition of $\sigma$.
	Then $\Phi(x,y) \subset (\tilde g=0) \subset (g=0)$.
	A similar reasoning holds for $(xy=-1)$.	
\end{itemize}

\end{proof}

We now prove that two generic fibers are also bilipschitz equivalent.
\begin{lemma}
	\label{lem:levelgen}
	The levels $(f_0=1)$ and $(f_1=1)$ are bilipschitz equivalent, that is to say	
	there exists a bilipschitz map $\Phi : \Rr^2 \to \Rr^2$ such that
	$\Phi((f_0=1)) = (f_1=1)$.
\end{lemma}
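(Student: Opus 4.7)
The plan is to reduce to Lemma~\ref{lem:level} by ``flattening'' the generic level curves onto the special ones at infinity. Setting $t=xy$, the equation $f_s=1$ becomes $x(t^2-st-1)=1$, giving the rational parametrization $x=1/(t^2-st-1)$, $y=t(t^2-st-1)$. The curve $(f_s=1)$ has six smooth arcs at infinity: two tangent to the $y$-axis (for $t\to\pm\infty$, with $x\sim1/y^{2/3}$) and four tangent to the hyperbolas $xy=r$ where $r$ runs over the real roots of $t^2-st-1$. For $s=0$ the roots are $\pm1$; for $s=1$ they are $\sigma$ and $-\tau$. In both cases the level set has three connected components of the same combinatorial type, so the ``asymptotic skeletons'' of $(f_0=1)$ and $(f_0=0)$ coincide outside a large ball, and similarly for $s=1$.

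A direct computation shows that the map of Lemma~\ref{lem:level} (call it $\Phi_0$) does \emph{not} send $(f_0=1)$ to $(f_1=1)$: using $f_1(x,\sigma y)=(\sigma+1)f_0(x,y)-\sigma x(xy-1)$ and $x(t-1)=1/(t+1)$ on $(f_0=1)$, one finds on the branch $t=xy\to 1$ that $f_1\circ\Phi_0-1=\sigma t/(t+1)\to\sigma/2\neq 0$. I would therefore build $\Phi$ as $\Phi_3\circ\Phi_0\circ\Phi_1$, where $\Phi_1$ is a bilipschitz ``flattening'' sending the six infinite arcs of $(f_0=1)$ onto those of $(f_0=0)$, and $\Phi_3$ is an analogous unflattening from $(f_1=0)$ to $(f_1=1)$. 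Near a hyperbolic asymptote, both curves are graphs $y=\phi_0(x)$, $y=\phi_1(x)$ with $|\phi_0-\phi_1|=O(1/x^2)$ and derivative difference of order $O(1/x^3)$, so the shear $(x,y)\mapsto(x,y+\rho(y-\phi_1(x))(\phi_0(x)-\phi_1(x)))$, with $\rho$ a smooth bump supported in a tube around $\phi_1$, has Jacobian $I+O(1/x^2)$ and is bilipschitz; analogous shears with the roles of $x$ and $y$ swapped take care of the $y$-axis branches. In a large ball $B_R$, the compact parts of $(f_0=1)$ and $(f_1=1)$ are smooth arcs of the same combinatorial type, and a bilipschitz isotopy between them is constructed directly and glued to the asymptotic map by bump functions on $\partial B_R$.

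The main obstacle is establishing the bilipschitz property of the global composition, particularly at the ``junctions'' where a $y$-axis branch and a hyperbolic branch meet within a single connected component of the curve, and across the boundary of $B_R$ where the compact isotopy must match the shears. The enabling fact is the polynomial decay at infinity of the displacement between the generic and special level curves (together with comparable decay of its derivatives), which keeps each local shear close to the identity outside a large ball and permits a smooth gluing with uniform Jacobian bounds.
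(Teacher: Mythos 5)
Your plan is correct in outline and rests on the right analytic facts, but it takes a genuinely different and more roundabout route than the paper. The paper constructs $\Phi$ directly: it writes both generic levels as graphs over the $x$-axis, $y_\pm=\pm\sqrt{x^{-2}+x^{-3}}$ for $(f_0=1)$ and $Y_\pm=\frac{1}{2x}\pm\frac12\sqrt{5x^{-2}+4x^{-3}}$ for $(f_1=1)$, sets $\Phi(x,y_\pm)=(x,Y_\pm)$ on the unbounded arcs, checks bilipschitzness in the three regimes $x\to0^+$ (where $Y_+\sim y_+\sim x^{-3/2}$) and $x\to\pm\infty$ (where $Y_+\sim\sigma y_+$, $Y_-\sim\tau y_-$, reusing the dilation computation of Lemma~\ref{lem:level}), patches the compact arc $-2\le x\le -1$ by hand, and extends to $\Rr^2$. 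Your factorization $\Phi_3\circ\Phi_0\circ\Phi_1$ through the special fibers is compatible with this: on the hyperbolic ends your composition acts on the curve exactly as the paper's map does, namely $(x,\tfrac1x+O(x^{-2}))\mapsto(x,\tfrac1x)\mapsto(x,\tfrac{\sigma}{x})\mapsto(x,\tfrac{\sigma}{x}+O(x^{-2}))$, and your displacement and derivative estimates ($O(x^{-2})$ and $O(x^{-3})$ near the hyperbolic asymptotes, $O(y^{-2/3})$ and $O(y^{-5/3})$ near the $y$-axis) are correct and are precisely the content of the paper's asymptotic checks. What your version buys is a clean separation of the new input (each generic level is bilipschitz-asymptotic at infinity to the special level) from Lemma~\ref{lem:level}, a statement that would transfer to other regular values; what it costs is three layers of shears and gluing where the paper uses one explicit map. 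If you write it out, make sure the tubes supporting the shears of $\Phi_1$ and $\Phi_3$ meet only the arcs they are meant to move (near $(0:1:0)$ the hyperbola branches of the zero level crowd against the $y$-axis, which is harmless only because the generic level has no ends asymptotic to those branches there). Your aside that $\Phi_0$ alone fails, via $f_1\circ\Phi_0-1=\sigma t/(t+1)\to\sigma/2$ on the branch $t\to1$, is a correct sanity check consistent with the paper's need for a separate construction here.
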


\begin{proof}
~
\begin{itemize}
	\item \textbf{Parameterization of $(f_0=1)$.}
	The curve $(f_0=1)$ has equation $x^3y^2-x-1=0$ and 
	a parameterization $(x,y)$ is given by 
	$$y_+ = \sqrt{\frac{1}{x^2}+\frac{1}{x^3}}\quad  \text{ or }\quad 
	y_- = -\sqrt{\frac{1}{x^2}+\frac{1}{x^3}} \quad  \text{ for } x \in ]-\infty,-1] \ \cup \  ]0,+\infty[.$$
	 
	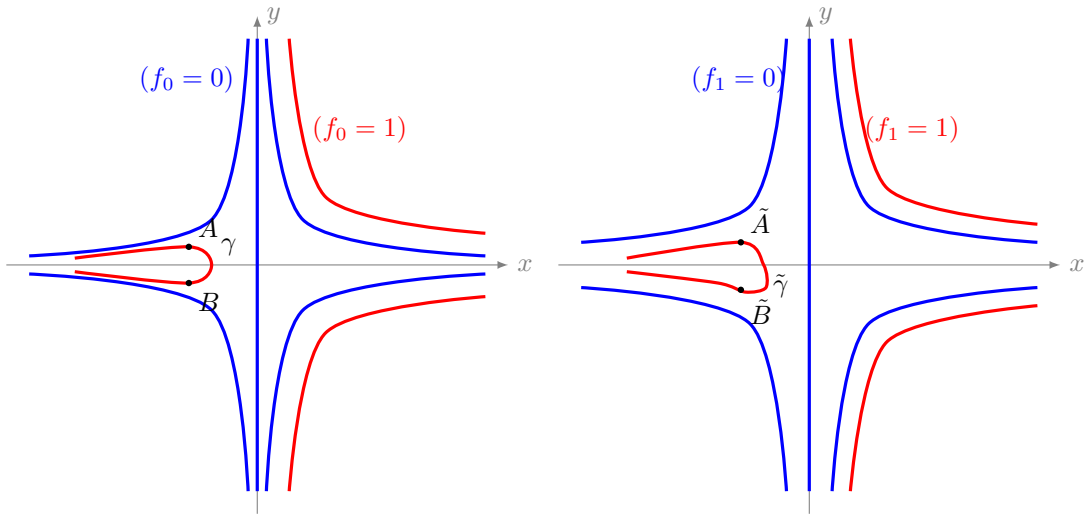
\begin{figure}[h]
	\begin{minipage}{0.45\textwidth}
		\myfigure{0.6}{\begin{tikzpicture}[scale=1]

\draw[->,>=latex,gray] (0,-5.5)--(0,5.5) node[right]{$y$};
\draw[->,>=latex,gray] (-5.5,0)--(5.5,0) node[right]{$x$};

\draw[very thick, blue] (0,-5) -- (0,5);
\draw[very thick, blue] plot [smooth] coordinates {(5,0.2)  (1,1)   (0.2,5) };
\draw[very thick, blue] plot [smooth] coordinates {(-5,0.2)  (-1,1)   (-0.2,5) };
\draw[very thick, blue] plot [smooth] coordinates {(5,-0.2)  (1,-1)   (0.2,-5) };
\draw[very thick, blue] plot [smooth] coordinates {(-5,-0.2)  (-1,-1)   (-0.2,-5) };

\draw[very thick, red] plot [smooth] coordinates {(5,0.2+0.5)  (1+0.5,1+0.5)   (0.2+0.5,5) };
\draw[very thick, red] plot [smooth] coordinates {(5,-0.2-0.5)  (1+0.5,-1-0.5)   (0.2+0.5,-5) };

\draw[very thick, red] plot [smooth] coordinates {(-4,0.15)  (-1.5,0.4)   (-1,0)  (-1.5,-0.4) (-4,-0.15) };

\node[blue,left] at (-0.3,4.1) {$(f_{0}=0)$};
\node[red,right] at (1,3) {$(f_{0}=1)$};

\fill (-1.5,0.4) circle (2pt) node[above right] {$A$};
\fill (-1.5,-0.4) circle (2pt) node[below right] {$B$};
\node[above right] at (-1,0) {$\gamma$};

\end{tikzpicture}}	
	\end{minipage}
	\quad	
	\begin{minipage}{0.45\textwidth}
		\myfigure{0.6}{\begin{tikzpicture}[scale=1]

\draw[->,>=latex,gray] (0,-5.5)--(0,5.5) node[right]{$y$};
\draw[->,>=latex,gray] (-5.5,0)--(5.5,0) node[right]{$x$};

\draw[very thick, blue] (0,-5) -- (0,5);
\draw[very thick, blue] plot [smooth] coordinates {(5,0.2+0.3)  (1+0.3,1+0.3)   (0.2+0.3,5) };
\draw[very thick, blue] plot [smooth] coordinates {(-5,0.2+0.3)  (-1-0.3,1+0.3)   (-0.2-0.3,5) };
\draw[very thick, blue] plot [smooth] coordinates {(5,-0.2-0.3)  (1+0.3,-1-0.3)   (0.2+0.3,-5) };
\draw[very thick, blue] plot [smooth] coordinates {(-5,-0.2-0.3)  (-1-0.3,-1-0.3)   (-0.2-0.3,-5) };

\draw[very thick, red] plot [smooth] coordinates {(5,0.2+0.7)  (1+0.7,1+0.7)   (0.2+0.7,5) };
\draw[very thick, red] plot [smooth] coordinates {(5,-0.2-0.7)  (1+0.7,-1-0.7)   (0.2+0.7,-5) };

\draw[very thick, red] plot [smooth] coordinates {(-4,0.15)  (-1.5,0.5)   (-1,0)  (-0.95,-0.5) (-1.4,-0.6) (-2,-0.4) (-4,-0.15) };

\node[blue,left] at (-0.3,4.1) {$(f_{1}=0)$};
\node[red,right] at (1,3) {$(f_{1}=1)$};

\fill (-1.5,0.5) circle (2pt) node[above right] {$\tilde A$};
\fill (-1.5,-0.55) circle (2pt) node[below right] {$\tilde B$};
\node[below right] at (-1,0) {$\tilde \gamma$};

\end{tikzpicture}}		
	\end{minipage}	
	
	\caption{The levels $(f_0=1)$ and $(f_1=1)$.}
	\label{fig0708}
	\end{figure}

	 \item\textbf{Parameterization of $(f_1=1)$.}
	 The curve 	$(f_1=1)$ has equation $x^3y^2-x^2y-x-1=0$,
	 a parameterization is given by: 
	 $$Y_+ = \frac{1}{2x} + \frac12\sqrt{\frac{5}{x^2}+\frac{4}{x^3}}\quad  \text{ or }\quad 
	 Y_- = \frac{1}{2x} - \frac12\sqrt{\frac{5}{x^2}+\frac{4}{x^3}} \quad  \text{ for } x \in ]-\infty,-\tfrac54] \ \cup \  ]0,+\infty[.$$
	 
	 \item \textbf{Definition of $\Phi$.}
	 
	 \begin{itemize}
	 	\item Case $x>0$. $\Phi$ is defined on $(f_0=1)$ using the parameterization by the formula $\Phi(x,y)= (x,Y_+)$, for $(x,y) \in (f_0=1)$ with $x>0$ and $y>0$;  $\Phi(x,y)= (x,Y_-)$, for $(x,y) \in (f_0=1)$ with $x>0$ and $y<0$.	 
	 		
	 	\item Case $x \le -2$. $\Phi$ is defined by the same formulas $\Phi(x,y)= (x,Y_+)$ (for $y>0$) or $\Phi(x,y)= (x,Y_-)$ (for $y<0$).
	 	
	 	\item Case $-2 \le x \le -1$.
	 	(Note: we do not use the above formulas in the neighborhood of the point $(-1,0)$ because the map $y_+ \mapsto Y_+$ is not bilipschitz near this point.)
	 	Let $A,B$ be the two points of $(x=-2)\cap(f_0=1)$. Let
	 	$\tilde A,\tilde B$ be their images by $\Phi$ (i.e.\ $A,B$ belong  $(x=-2)\cap(f_1=1)$). Let $\gamma$ be the compact part of $(f_0=1)$ between $A$ and $B$ and $\tilde \gamma$ be the compact part of $(f_1=1)$ between $\tilde A$ and $\tilde B$.	 	
	 	We extend $\Phi$ in a bilipschitz way from $\gamma$ to $\tilde \gamma$.
	 	This is possible as $\gamma$ and $\tilde \gamma$ are two compact connected components of a smooth algebraic curve. 
	 	$\Phi$ is now defined everywhere on $(f_0=1)$.
	 	
	 	\item We extend $\Phi$ on $\Rr^2$ to a bilipschitz map $\Phi : \Rr^2 \to \Rr^2$. For instance we may suppose $\Phi$ is the identity outside a tubular neighborhood or radius $1$ of $(f_0=1)$.
	 \end{itemize}

 	\item \textbf{Bilipschitz on $(f_0=1)$.}
 	It remains to justify that $\Phi$ is actually a bilipschitz map from $(f_0=1)$ to $(f_1=1)$. 
 	
 	\begin{itemize}
 		\item Case $x>0$ and $x\to 0$. Hence $y \to \pm\infty$.
 		Then $y_+ \sim \frac{1}{x^{3/2}}$ and $Y_+ \sim \frac{1}{x^{3/2}} \sim y_+$ so that the map $\Phi(x,y_+) = (x,Y_+)$ is bilipschitz.
 		The same applies for $y_-$ and $Y_-$.
 		
 		\item Case $x\to+\infty$. Hence $y\to 0$.
 		Then $y_+ \sim \frac1x$ and $Y_+ \sim \frac{\sqrt{5}+1}{2} \cdot \frac1x \sim \sigma y_+$. Then, as in the proof of proposition \ref{lem:level},
 		$\Phi(x,y_+) = (x,Y_+)$ is bilipschitz.
 		The same applies for $y_-$ and $Y_- \sim \tau y_-$ with $\tau =  \frac{\sqrt{5}-1}{2}$.
 		
 		\item Case $x \to -\infty$. It is similar to the previous case:
 		 $Y_+ \sim \tau y_+$, $Y_- \sim \sigma y_-$.
 		
 	\end{itemize}
\end{itemize}

\end{proof}

\section{Moduli}

The following theorem proves that under bilipschitz equivalence at infinity a family of polynomials can have moduli. It is a version at infinity of the example of Henry and Parusi\'nski \cite{HP}.
Two functions $f,g : \Kk^n \to \Kk$ are \defi{right-bilipschitz equivalent at infinity} if there exist compact sets $C,C'$ and a bilipschitz map $\Phi : \Kk^n\setminus C \to \Kk^n\setminus C'$ such that $g \circ \Phi = f$.

Using this notion, we will prove the moduli affirmation of theorem \ref{th:main} with the following refinement.
\begin{theorem1bis}
\label{th:moduli}

$$f_s(x,y) = x(x^2y^2 -sxy-1) \in \Kk[x,y].$$

\begin{itemize}
	\item $\Kk=\Rr$. 
	Any two polynomials $f_s$ and $f_{s'}$ with $s,s' \in \Rr$, $s\neq s'$ are not right-bilipschitz equivalent at infinity (hence not globally right-bilipschitz equivalent).
	Moreover they are also not left-right-equivalent if we assume $\Phi$ analytic at infinity.
	
	\item $\Kk=\Cc$. 
	Fix $s\in\Cc$, with $s^2+3\neq0$. For all but finitely many (explicit) $s'\in \Cc$, $f_s$ and $f_{s'}$ are not right-bilipschitz equivalent at infinity (hence not globally right-bilipschitz equivalent).	
\end{itemize}
\end{theorem1bis}

\subsection{Preliminaries}

\label{ssec:preliminaries}

\begin{itemize}
	\item Let $f_s(x,y) = x(x^2y^2-sxy-1) = x^3y^2 -sx^2y-x$.
	
	\item Then $\partial_x f_s(x,y) = 3x^2y^2 - 2sxy-1$.
	
	\item The equation $3z^2-2sz-1=0$ has discriminant $\Delta = 4(s^2+3)$ and two solutions:
	$$\alpha_s = \frac{s+\sqrt{s^2+3}}{3} \quad \text{and} \quad  \beta_s = \frac{s-\sqrt{s^2+3}}{3}.$$
	
	\item The polar curve $\Gamma_s : (\partial_x f_s = 0)$, associated to the projection on the $y$-axis, has two components:
	$$(xy=\alpha_s) \quad \text{and} \quad (xy=\beta_s),$$
	parameterized by:
	$$\left(\alpha_s t,\frac1t\right) \quad \text{and} \quad \left(\beta_s t,\frac1t\right) \qquad t \in \Kk\setminus\{0\}.$$
	
	\item We compute the values of $f_s$ on the polar components. Near the point at infinity $(0:1:0)$, that is to say for $t\to 0$, we compute the values of $f_s$ on each branch of $\Gamma_s$:
	$$f_s \left(\alpha_s t,\frac1t\right) = \alpha_s(\alpha_s^2-s\alpha_s-1) t,$$
	and	$$f_s \left(\beta_s t,\frac1t\right) = \beta_s(\beta_s^2-s\beta_s-1) t.$$
	
	\item We compare theses values for two branches at a same $y$-value:
	$$\frac{f_s \left(\alpha_s t,\frac1t\right)}{f_s \left(\beta_s t,\frac1t\right)}
	= \frac{\alpha_s(\alpha_s^2- s \alpha_s-1)}{\beta_s(\beta_s^2- s \beta_s-1)}.$$
	
	\item Our arguments will only focus on a neighborhood of a the point $(0:1:0)$ at infinity. More precisely we will say that an analytic curve $(x(t),y(t))$ \defi{tends to the point at infinity} $(0:1:0)$ if $y(t) \to +\infty$ and $\frac{|x(t)|}{|y(t)|} \to 0$ as $t\to 0$.
\end{itemize}

\subsection{Proof in the real case}
\label{ssec:proofreal}

\begin{itemize}
	\item Fix $t>0$. Let $A, B, C, D, E$ be the following points having all $y$-coordinates equal to $\frac1t$:
	\begin{itemize}
		\item $A \in (f_s=0)$ with $x_A>0$,
		\item $B \in \Gamma_s:(\partial_x f_s=0)$ with $x_B>0$,
		\item $C = (0,\frac1t) \in (f_s=0)$,
		\item $D \in \Gamma_s:(\partial_x f_s=0)$ with $x_D<0$,	
		\item $E \in (f_s=0)$ with $x_E<0$.		
	\end{itemize}

\begin{figure}[h]
	\myfigure{1}{\begin{tikzpicture}[scale=1]

\draw[very thick, blue] (0,0) -- (0,5);
\draw[very thick, blue] plot [smooth] coordinates {(-2,0)  (-0.8,2.5)   (-0.4,5) };
\draw[very thick, blue] plot [smooth] coordinates {(2,0)  (0.8,2.5)   (0.4,5) };

\draw[very thick, black] (-0.8,2.5) -- (0.8,2.5);

\draw[very thick, green!70!black] plot [smooth] coordinates {(0.3,0)  (0.3,2)   (0.4,2.5)  (0.6,2) (1,0)};
\draw[very thick, green!70!black] plot [smooth] coordinates {(-0.3,0)  (-0.3,2)   (-0.4,2.5)  (-0.6,2) (-1,0)};

\draw[thick, red] plot [smooth] coordinates {(0.7,0)   (0.4,2.5)  (0.2,5)};
\draw[thick, red] plot [smooth] coordinates {(-0.7,0)   (-0.4,2.5)  (-0.2,5)};

\coordinate (A) at (0.8,2.5);
\node[above right] at (A) {$A$};
\node[black, fill,circle,scale=0.3] at (A) {};

\coordinate (B) at (0.4,2.5);
\node[above] at (B) {$B$};
\node[black, fill,circle,scale=0.3] at (B) {};

\coordinate (C) at (0,2.5);
\node[above] at (C) {$C$};
\node[black, fill,circle,scale=0.3] at (C) {};

\coordinate (D) at (-0.4,2.5);
\node[above] at (D) {$D$};
\node[black, fill,circle,scale=0.3] at (D) {};

\coordinate (E) at (-0.8,2.5);
\node[above left] at (E) {$E$};
\node[black, fill,circle,scale=0.3] at (E) {};

\node[blue,below] at (-2.5,0) {$(f_s=0)$};
\node[blue,green!70!black,below] at (0.8,0) {$(f_s=c)$};
\node[blue,green!70!black,below] at (-0.8,0) {$(f_s=c')$};
\node[red,above] at (0.2,5) {$\Gamma_s$};
\end{tikzpicture}}		
	\caption{The situation for $f_s$.}
	\label{fig01}
\end{figure}
	
	\item Let us fix $s,s' \in \Rr$. 
	By contradiction let us assume that there exists a bilipschitz homeomorphism $\Phi : \Rr^2 \to \Rr^2$ such that $f_{s'} \circ \Phi = f_s$. 
	Let $K$ be its bilipschitz constant.
	Let $\tilde A, \tilde B,\ldots$ be the image by $\Phi$ of $A,B,\ldots$
	Let $\gamma$ be the segment $[AB]$ and $\tilde \gamma = \Phi(\gamma)$.
	
	\item $\Phi$ sends $(f_s=0)$ to $(f_{s'}=0)$ and, as it is a homeomorphism, it should send the component $(x=0)$ of  $(f_s=0)$ to the component $(x=0)$ of $(f_{s'}=0)$. Hence
	$x_{\tilde C} = 0$.
	
	\item $A,B,C,D,E$ and $\gamma$ are all included in the disk of radius $rt$ centered at 
	$C$, where $r$ is a constant that depends only on the fixed value $s$. Hence by the bilipschitz map $\Phi$,
	$\tilde A,\tilde B,\tilde C,\tilde D,\tilde E$ and $\tilde \gamma$ are all included in a disk of radius $Krt$ centered at $\tilde C$. 
	
	\item There is an issue: the point $B$ is on the polar curve $\Gamma_s$ but $\tilde B$ has no reason to be on $\Gamma_{s'}$. We will replace $\tilde B$ by a point $B'$ satisfying this condition.
	
\begin{figure}[h]
	\begin{minipage}{0.35\textwidth}
	\myfigure{0.8}{\begin{tikzpicture}[scale=1]

\draw[very thick, blue] (0,0) -- (0,5);
\draw[very thick, blue] plot [smooth] coordinates {(-2,0)  (-0.8,2.5)   (-0.4,5) };
\draw[very thick, blue] plot [smooth] coordinates {(2,0)  (0.8,2.5)   (0.4,5) };

\draw[very thick, black]plot [smooth] coordinates {(-0.9,2.2)  (-0.4,3)   (0,2.1)  (0.27,2.3) (0.4,2.8) (0.7,2.9)} ;

\draw[very thick, green!70!black] plot [smooth] coordinates {(0.3,0)  (0.3,2)   (0.4,2.5)  (0.6,2) (1,0)};
\draw[very thick, green!70!black] plot [smooth] coordinates {(-0.3,0)  (-0.3,2)   (-0.4,3)  (-0.6,2) (-1,0)};

\node[black,below] at (0.25,3.5) {$\tilde \gamma$};
\node[blue,below] at (-2.5,0) {$(f_{s'}=0)$};
\node[blue,green!70!black,below] at (0.9,0) {$(f_{s'}=c)$};
\node[blue,green!70!black,below] at (-0.8,0) {$(f_{s'}=c')$};

\end{tikzpicture}}	
	\end{minipage}
	\qquad	
	\begin{minipage}{0.35\textwidth}
	\myfigure{0.8}{\begin{tikzpicture}[scale=2.5]

\begin{scope}
\clip(-0.2,1.5) rectangle (2,3.5);

\draw[very thick, blue] (0,0) -- (0,5);
\draw[very thick, blue] plot [smooth] coordinates {(3,0)  (0.8,2.5)   (0.4,5) };

\draw[very thick, black]plot [smooth] coordinates {(-0.9,2.3)  (-0.4,3)   (0,2.1)  (0.29,2.25) (0.4,2.8) (0.65,2.9)} ;

\draw[very thick, green!70!black] plot [smooth] coordinates {(0.3,0)  (0.3,2)   (0.4,2.5)  (0.6,2) (1,0)};
\draw[very thick, green!70!black] plot [smooth] coordinates {(-0.3,0)  (-0.3,2)   (-0.4,3)  (-0.6,2) (-1,0)};

\draw[thick, red] plot [smooth] coordinates {(0.7,0)   (0.4,2.5)  (0.2,5)};
\draw[thick, red] plot [smooth] coordinates {(-0.7,0)   (-0.4,2.5)  (-0.2,5)};

\coordinate (A) at (0.65,2.9);
\node[above right] at (A) {$\tilde A$};
\node[black, fill,circle,scale=0.3] at (A) {};

\coordinate (B) at (0.32,2.3);
\node[left] at (B) {$\tilde B$};
\node[black, fill,circle,scale=0.3] at (B) {};

\coordinate (BB) at (0.4,2.5);
\node[above right] at (BB) {$B'$};
\node[black, fill,circle,scale=0.3] at (BB) {};

\coordinate (C) at (0,2.1);
\node[below left] at (C) {$\tilde C$};
\node[black, fill,circle,scale=0.3] at (C) {};

\end{scope}

\node[blue,below] at (-0.2,1.5) {$(f_{s'}=0)$};
\node[blue,green!70!black,below] at (1,1.5) {$(f_{s'}=c)$};
\node[red,above] at (0.3,3.5) {$\Gamma_{s'}$};

\end{tikzpicture}}		
	\end{minipage}	
	
	\caption{The situation for $f_{s'}$.}
	\label{fig0202}
\end{figure}
	
	\item Let $c = f_s(B)$. Let $\tilde X_{c}$ be the part of $(f_{s'}=c)$ in the ball of radius $Krt$ centered at $\tilde C$. As $f_{s'} (\tilde B) =f_s(B) = c$, then $\tilde B \in \tilde X_c$ and $\tilde X_c$ is non empty. Moreover $\tilde X_{c}$ is contained between two components of $(f_{s'}=0)$: $(x=0)$
	and one branch of $(x^2y^2 - s'xy - 1 = 0)$. Moreover $\tilde X_c$ is strictly below $\tilde \gamma$ except at $\tilde B$ (because $(f=c)$ is below $\gamma = [AB]$ and intersects it only at $B$). 
	
	\item Let $B'$ be the point of $\tilde X_c$ such that $y_{B'}$ is maximal among points of $\tilde X_c$. Then the tangent at $B'$ is horizontal, that is to say $\partial_x f_{s'}(B')=0$, hence $B' \in \Gamma_{s'}$. 
	Remember also that $B' \in \tilde X_c$ so that $f_{s'}(B') = c$.
	
	\item Partial conclusion: we constructed a point $B' \in \Gamma_{s'} \cap (f_{s'} = c)$ such that $\| B' - \tilde C \| \le Krt$ (with $x_{B'}>0$).
	
	\item We carry on the same proof for the other side. Let $c'=f_s(D)$, we find a point $D' \in \Gamma_{s'} \cap (f_{s'} = c')$ 
	such that $\| D' - \tilde C \| \le Krt$ (with $x_{D'}<0$).
	
	\item Now both these points $B'$ and $D'$ are in the same disk of radius $Krt$ centered at $\tilde C$. In particular:
	$$y_{D'} - 2Krt \le y_{B'} \le y_{D'} + 2Krt.$$
 		
	\item Let $B' = (\alpha_{s'}t',\frac1{t'})$ be the coordinates of $B'$ on the first branch of $\Gamma_{s'}$ and $D' = (\beta_{s'}t'',\frac1{t''})$ be the coordinates of $D'$ on the second branch of $\Gamma_{s'}$.
	The former inequalities rewrite:
	$$\frac{1}{t''} - 2Krt \le \frac{1}{t'} \le \frac{1}{t''} + 2Krt.$$	
	We consider $t\to 0$, so that $t'\to0$, $t'' \to 0$ (a neighborhood of $(0:1:0)$ is send to a neighborhood of $(0:1:0)$).
	Hence $t'' = t' + O(tt't'')= t' + O(t t'^2)$.
	
	\item Now
	$$\frac{f_{s'} \left(\alpha_{s'} t',\frac1{t'}\right)}{f_{s'} \left(\beta_{s'} t'',\frac1{t''}\right)}	
	= \frac{\alpha_{s'}t'\left( \alpha_{s'}^2-s'\alpha_{s'}-1  \right)}
	       {\beta_{s'}t''\left( \beta_{s'}^2-s'\beta_{s'}-1  \right)}$$
	$$       
	= \frac{\alpha_{s'}t'\left( \alpha_{s'}^2-s'\alpha_{s'}-1  \right)}
	       {\beta_{s'}(t'+ O(t t'^2))\left(\beta_{s'}^2-s'\beta_{s'}-1  \right)}
	\longrightarrow \frac{\alpha_{s'}(\alpha_{s'}^2-s'\alpha_{s'}-1)}{\beta_{s'}(\beta_{s'}^2-s'\beta_{s'}-1)}$$
	as $t'\to 0$.

	\item On the other hand:
	$$\frac{f_{s'}(B')}{f_{s'}(D')} = \frac{c}{c'} = \frac{f_{s'}(\tilde B)}{f_{s'} (\tilde D)}
			= \frac{f_{s}(B)}{f_{s}(D)} =  \frac{\alpha_s(\alpha_s^2-s\alpha_s-1)}{\beta_s(\beta_s^2-s\beta_s-1)}.$$
		
	Finally:  
	$$\frac{\alpha_s(\alpha_s^2-s\alpha_s-1)}{\beta_s(\beta_s^2-s\beta_s-1)} =  \frac{\alpha_{s'}(\alpha_{s'}^2-s'\alpha_{s'}-1)}{\beta_{s'}(\beta_{s'}^2-s'\beta_{s'}-1)}.$$

	\item The map $s \mapsto \frac{\alpha_s(\alpha_s^2-s\alpha_s-1)}{\beta_s(\beta_s^2-s\beta_s-1)}
	= \frac{2(s^2+3)\alpha_s+s}{2(s^2+3)\beta_s+s}$ is strictly decreasing for $s\in \Rr$ so that $s=s'$.
	
	\item Conclusion: if $s,s' \in \Rr$, with $s\neq s'$, then there exists no bilipschitz homeomorphism sending $f_s$ to $f_{s'}$. Since our arguments only care about situation near $(0:1:0)$ $f_s$ and $f_{s'}$ are not right-bilipschitz equivalent at infinity.
		
\end{itemize}

\subsection{No left-right-equivalence}
\label{ssec:noleftright}

We now prove that for $s\neq s'$ $f_s$ and $f_{s'}$ are not left-right-equivalent, if we ask the homeomorphism $\Phi$ to be analytic near the point at infinity $(0:1:0)$.
By contradiction we suppose that there exist bilipschitz homeomorphisms $\Phi$ and $\Psi$ such that $f_{s'} \circ \Phi = \Psi \circ f_{s}$ and $\Phi$ is analytic near the point at infinity $(0:1:0)$.
We continue with the same notation as above,
but we cannot conclude as before because we no longer have $\frac{f_{s'}(B')}{f_{s'}(D')}$ equal to $\frac{f_{s}(B)}{f_{s}(D)}$.

\begin{itemize}
	\item Let $C=(0,\frac1t)$ and $\Phi(C)=\tilde C = (0,\frac{1}{\tilde t})$ ($t>0$).
	The map $\frac 1t \mapsto \frac{1}{\tilde t}$ is a bilipschitz homeomorphism.
	We will assume $\Phi(0,0)=(0,0)$ so that 
	$\frac1K \frac1t \le \frac{1}{\tilde t} \le K \frac1t$
	hence $\frac1Kt \le \tilde t \le K t$. Define $\chi(t) = \tilde t$, for $t>0$, and set $\chi(0)=0$. 
	In the following we will actually only need the relation 
	$\frac1Kt \le \chi(t) \le K t$, but in fact the map $t\mapsto \chi(t)$ is a bilipschitz homeomorphism (with the constant $K^3$).
	
	\item We assumed that the map $\Phi$ is analytic at infinity around $(0:1:0)$. It implies that
	the map $t\mapsto \chi(t)$ is analytic for $t>0$: 
	$\chi(t) = a_0t^{r_0}+a_1t^{r_1}+\cdots$
	The map $\chi$ being bilipschitz it implies $r_0=1$ so that $\chi(t) = a_0t+a_1t^{r_1}+\cdots$ with $r_1>1$.
	
	\item Notice that the relation $f_{s'} \circ \Phi = \Psi \circ f_{s}$ implies that the map	$\Psi$ is also an analytic map. 
	
	\item Recall that $B=(\alpha_st,\frac1t)$ and $f_s(B)=c=\alpha_s(\alpha_s^2-s\alpha_s-1)t$,
	$D=(\beta_st,\frac1t)$ and $f_s(D)=c'=\beta_s(\beta_s^2-s\beta_s-1)t$.
	$\Phi(B)=\tilde B$ and $f_{s'}(\tilde B) = \tilde c = \Psi(c)$,
	$\Phi(D)=\tilde D$ and $f_{s'}(\tilde D) = \tilde c' = \Psi(c')$.
	We found $B' = (\alpha_{s'}t',\frac1{t'})$ close to $\tilde B$ such that $f_{s'}(B')=f_{s'}(\tilde B) = \tilde c$. Hence $\tilde c = \alpha_{s'}(\alpha_{s'}^2-s'\alpha_{s'}-1)t'$.
	Similarly $D'= (\beta_{s'}t'',\frac1{t''})$ is close to $\tilde D$ and $f_{s'}(D')=f_{s'}(\tilde D) = \tilde c'$. Hence $\tilde c' = \beta_{s'}(\beta_{s'}^2-s'\beta_{s'}-1)t''$.
	
	$B'$ is close to $\tilde B$ actually means $\left| \frac1{t'}-\frac{1}{\tilde t}\right| \le Krt$,
	that implies $|t'  - \tilde t| \le Krt t'\tilde t$. That implies $t' = \chi(t) + O(t^3)$.
	Similarly $t''= \chi(t) + O(t^3)$.
	
	\item The map $\Psi$ is defined, for negative values, by $c \mapsto \tilde c$ that is to say
	$\alpha_s(\alpha_s^2-s\alpha_s-1)t \mapsto \alpha_{s'}(\alpha_{s'}^2-s'\alpha_{s'}-1)t'$.
	It implies that, for $u<0$, the map $\Psi$ is defined by
	$$\Psi : u \mapsto  \alpha_{s'}(\alpha_{s'}^2-s'\alpha_{s'}-1)\chi\left(\frac{u}{\alpha_s(\alpha_s^2-s\alpha_s-1)}\right)
	+O(u^3).$$
	Hence, as $\chi(t) = a_0t+o(t)$:
	$$\Psi : u \mapsto  \frac{\alpha_{s'}(\alpha_{s'}^2-s'\alpha_{s'}-1)}{\alpha_s(\alpha_s^2-s\alpha_s-1)}u
	+o(u).$$
	Similarly $\Psi(d)=\tilde d$ so that for $u>0$:
	$$\Psi : u \mapsto  \frac{\beta_{s'}(\beta_{s'}^2-s'\beta_{s'}-1)}{\beta_s(\beta_s^2-s\beta_s-1)}u
	+o(u).$$
	
	\item By analycity of $\Psi$, it implies that the coefficients of $u$ are equal, whence 
	$$\frac{\alpha_s(\alpha_s^2-s\alpha_s-1)}{\beta_s(\beta_s^2-s\beta_s-1)} = \frac{\alpha_{s'}(\alpha_{s'}^2-s'\alpha_{s'}-1)}{\beta_{s'}(\beta_{s'}^2-s'\beta_{s'}-1)},$$
	which is impossible for $s\neq s'$ as we have seen before in section \ref{ssec:proofreal}.
\end{itemize}
	
\subsection{No left-right-equivalence (again)}
	
It is not clear whether $f_s$ and $f_{s'}$ ($s\neq s'$) are or not left-right bilipschitz equivalent when no restriction is made on $\Phi$.
However we can complicate our example in order to exclude left-right  equivalence.

\begin{lemma}
Let 
$$f_s(x,y) = x(x^4y^4-3sx^2y^2+1)$$
be a family of polynomials in $\Rr[x,y]$.
Then for $s,s'>1$, with $s\neq s'$, the polynomials
$f_s$ and $f_{s'}$ are not left-right bilipschitz equivalent.
\end{lemma}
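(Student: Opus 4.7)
The approach is to adapt the arguments of Sections 3.2--3.3, exploiting the fact that the new $f_s$ has \emph{four} real polar branches rather than two. A direct calculation gives $\partial_x f_s(x,y) = 5x^4y^4-9sx^2y^2+1$, so setting $w=x^2y^2$ the equation $5w^2-9sw+1=0$ has two real positive roots $w_+>w_->0$ with $w_+w_-=1/5$ and $w_++w_-=9s/5$ for every $s>1$. The polar curve $(\partial_xf_s=0)$ therefore has four components $xy=\pm\alpha_s,\pm\beta_s$ with $\alpha_s^2=w_+$, $\beta_s^2=w_-$. Each admits the parameterisation $(x,y)=(\gamma t,1/t)$, and using $5\gamma^4=9s\gamma^2-1$ one finds $f_s(\gamma t,1/t)=\lambda_\gamma t$ with $\lambda_\gamma=2\gamma(2-3s\gamma^2)/5$. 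For $s>1$ the Vieta relations give $3sw_+>2>3sw_-$, hence $\lambda_{\alpha_s}<0<\lambda_{\beta_s}$, and the oddness $f_s(-x,y)=-f_s(x,y)$ furnishes four polar values $\pm\lambda_{\alpha_s}t$ and $\pm\lambda_{\beta_s}t$ at height $y=1/t$.

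Suppose toward contradiction that there exist bilipschitz $\Phi:\Rr^2\to\Rr^2$ and $\Psi:\Rr\to\Rr$ with $f_{s'}\circ\Phi=\Psi\circ f_s$. Following Section~3.2, the unique line-like unbounded component of the zero level, namely the $y$-axis, must be sent to the corresponding component of $f_{s'}^{-1}(\Psi(0))$; this forces $\Psi(0)=0$ and $\Phi(0,1/t)=(0,1/\tilde t)$ with $t\mapsto\tilde t$ bilipschitz. For each of the four polar points $P_\gamma=(\gamma t,1/t)$ I apply the maximum-on-a-level-curve construction of Section~3.2: inside the ball of radius $Krt$ around $\tilde C=(0,1/\tilde t)$, select the portion of $(f_{s'}=\Psi(\lambda_\gamma t))$ lying between the $y$-axis and the neighbouring branch of $(x^4y^4-3s'x^2y^2+1=0)$, and take the point $P'_\gamma$ of maximal $|y|$. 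This $P'_\gamma$ lies on $(\partial_xf_{s'}=0)$; the matching of branches is fixed by the sign of $x$ (preserved by $\Phi$) and by the inner/outer ordering (outer$=\alpha$, inner$=\beta$) of the two polar branches on each side near $(0:1:0)$, yielding $\pm\alpha_s\leftrightarrow\pm\alpha_{s'}$ and $\pm\beta_s\leftrightarrow\pm\beta_{s'}$. Writing $P'_\gamma=(\gamma't'_\gamma,1/t'_\gamma)$, we get $\Psi(\lambda_\gamma t)=\lambda_{\gamma'}t'_\gamma$, and the proximity estimate of Section~3.3 yields $t'_\gamma=\tilde t+O(t^3)$ for each $\gamma$.

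The crucial new step is to pair the two branches $+\beta_s$ and $-\alpha_s$, on which $f_s$ takes values $\lambda_{\beta_s}t>0$ and $-\lambda_{\alpha_s}t>0$ of the same positive sign. The identities above give
\[
\Psi(\lambda_{\beta_s}t)=\lambda_{\beta_{s'}}\tilde t+O(t^3),\qquad \Psi(-\lambda_{\alpha_s}t)=-\lambda_{\alpha_{s'}}\tilde t+O(t^3).
\]
Dividing cancels $\tilde t$, and writing $u=-\lambda_{\alpha_s}t$, $r=\lambda_{\beta_s}/(-\lambda_{\alpha_s})>0$, $r'=\lambda_{\beta_{s'}}/(-\lambda_{\alpha_{s'}})>0$, we obtain
\[
\lim_{u\to 0^+}\frac{\Psi(ru)}{\Psi(u)}=r'.
\]
This is the feature that was unavailable in Section~3.2: both arguments of $\Psi$ are now strictly positive, so the quotient never crosses the zero of $\Psi$, and the following rigidity lemma applies.

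Rigidity lemma: if $\Psi:\Rr\to\Rr$ is $K$-bilipschitz with $\Psi(0)=0$, if $r>0$, and if $\ell=\lim_{u\to 0^+}\Psi(ru)/\Psi(u)$ exists, then $\ell=r$. Indeed, iterating gives $\Psi(r^nu)/\Psi(u)\to\ell^n$ for every $n\ge 1$; but the bilipschitz bounds $|\Psi(v)/v|\in[1/K,K]$ confine this ratio to $[r^n/K^2,r^nK^2]$, so $(\ell/r)^n\in[1/K^2,K^2]$ for all $n$, forcing $\ell=r$. This yields $r=r'$, i.e.\ $\lambda_{\beta_s}/\lambda_{\alpha_s}=\lambda_{\beta_{s'}}/\lambda_{\alpha_{s'}}$. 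Substituting $\lambda_\gamma=2\gamma(2-3s\gamma^2)/5$, this ratio becomes $\sqrt{5}\,w_-(2-3sw_-)/(2-3sw_+)$, and a direct computation using the Vieta formulas shows that it is strictly monotone in $s$ on $(1,+\infty)$; hence $s=s'$, a contradiction. The main obstacle I would anticipate is the topological identification of which polar branch of $f_{s'}$ matches which of $f_s$ (four options instead of two), together with the explicit monotonicity check at the end.
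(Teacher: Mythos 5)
Your proposal is correct and follows the paper's strategy in all essentials: same polar branches of $(\partial_xf_s=0)$, same choice of the two branches ($xy=-\alpha_s$ and $xy=\beta_s$) on which $f_s$ takes values of the \emph{same} sign at a common height $y=1/t$, same geometric transfer of these polar points through $\Phi$ to polar points of $f_{s'}$ at parameter $\tilde t+O(t^3)$, and the same final identity $\lambda_{\beta_s}/\lambda_{\alpha_s}=\lambda_{\beta_{s'}}/\lambda_{\alpha_{s'}}$ killed by monotonicity in $s$. The one genuine divergence is the last analytic step: the paper eliminates $\Psi$ by equating its two expressions on $u>0$, obtaining the additive functional equation $\chi(pv)=q\chi(v)+O(v^3)$ for the trace $\chi$ of $\Phi$ on the $y$-axis, and then invokes its Lemma~\ref{lem:biliaff} (whose proof iterates the approximate identity and must treat the case $q=p^3$ separately); you instead divide the two asymptotic relations so that $\tilde t$ cancels, obtaining the multiplicative limit $\lim_{u\to0^+}\Psi(ru)/\Psi(u)=r'$, and prove a cleaner rigidity statement for the bilipschitz map $\Psi$ itself by iterating an exact limit against the two-sided bound $|\Psi(v)/v|\in[K^{-1},K]$. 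Your version buys a simpler rigidity lemma with no exceptional case and no error accumulation; the paper's version has the advantage of producing a statement (Lemma~\ref{lem:biliaff}) about $\chi$ that is reusable and independent of $\Psi$. Two small points to tighten: the sign of $x$ need not be preserved by $\Phi$, but the symmetry $f_s(-x,y)=-f_s(x,y)$ makes the two candidate target branches carry values of equal absolute value, so the ratio argument is unaffected; and the final monotonicity of $s\mapsto\lambda_{\beta_s}/\lambda_{\alpha_s}$ on $(1,+\infty)$, which you and the paper both leave as a computation, does need to be carried out.
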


\begin{proof}
The proof is similar to the proof of section \ref{ssec:noleftright}.
\begin{itemize}
	\item The equation $5z^4-9sz^2+1=0$ has $4$ real solutions 
	$-\alpha_s<-\beta_s<\beta_s<\alpha_s$ corresponding to $4$ branches of the polar curve $(\partial_x f_s=0)$.
	
	\item We use the same method as before in section \ref{ssec:noleftright} with $B=(-\alpha_st,\frac1t)$, $f_s(B)=-\alpha_s(\alpha_s^4-3s\alpha_s^2+1)t=
	c_s t>0$ and $D=(\beta_st,\frac1t)$, $f_s(D)=\beta_s(\beta_s^4-3s\beta_s^2+1)t=d_s t>0$ (with $t>0$). 
	
	\item This times for $u>0$ we have two formulas for $\Psi$ :
	$$\Psi(u) = c_{s'}\chi\left(\frac{u}{c_s}\right)+O(u^3),$$
	and 
	$$\Psi(u) = d_{s'}\chi\left(\frac{u}{d_s}\right)+O(u^3).$$
	
	\item It implies that the bilipschitz map $\chi$ verifies 
	$$\chi\left( \frac{c_s}{d_s} v \right) = \frac{c_{s'}}{d_{s'}} \chi(v) + O(v^3)$$
	for all $v>0$ near $0$.
	
	\item Then by lemma \ref{lem:biliaff} below, it implies $p=\frac{c_s}{d_s}>1$ is equal to $q=\frac{c_{s'}}{d_{s'}}>1$ which is impossible if $s\neq s'$.

\end{itemize}
\end{proof}

\begin{lemma}
\label{lem:biliaff}
Let $\chi: \Rr \to \Rr$ be a bilipschitz map such that
$$\chi(pv) = q\chi(v)+O(v^3)$$
for some constant $p,q>1$ and all $v$ near $0$.
Then $p=q$. 
\end{lemma}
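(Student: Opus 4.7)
The plan is to run an iterative argument on the functional equation, comparing the growth rate of the leading term with that of the accumulated errors. First, evaluating the hypothesis at $v=0$ gives $\chi(0)=q\chi(0)$, so $\chi(0)=0$ (since $q\neq 1$), and the bilipschitz property then yields the two-sided bound $\tfrac{1}{K}|v|\le|\chi(v)|\le K|v|$. Substituting $v/p$ for $v$ rewrites the hypothesis in the more convenient form $|\chi(v)-q\chi(v/p)|\le M|v|^3/p^3$, valid on a small neighborhood of $0$.

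I would then iterate this relation $n$ times to get
$$\chi(v)=q^n\chi(v/p^n)+E_n,\qquad |E_n|\le M|v|^3\sum_{k=1}^{n}\frac{q^{k-1}}{p^{3k}}.$$
The key observation is that the main term $q^n\chi(v/p^n)$ has size comparable to $(q/p)^n|v|$ by the bilipschitz estimates, while $|E_n|$ grows at most like $|v|^3\cdot\max(1,(q/p^3)^n)$. Since $p>1$, the ratio of these two rates is $(q/p)/(q/p^3)=p^2>1$, so the main term dominates the error by a factor of $p^{2n}$ regardless of how $q$ compares to $p$ or to $p^3$.

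This rate comparison handles both cases. If $q>p$, then $|q^n\chi(v/p^n)|\ge(q/p)^n|v|/K\to\infty$ for $v\neq 0$, whereas $|\chi(v)-E_n|$ cannot grow that fast (since $\chi(v)$ is fixed and $E_n$ grows strictly more slowly), a contradiction; hence $q\le p$. If $q<p$, then $q/p^3<1$, so the series defining $E_n$ converges to some $E_\infty$ with $|E_\infty|\le C|v|^3$, while $q^n\chi(v/p^n)\to 0$; passing to the limit in the iteration identity gives $\chi(v)=O(|v|^3)$, contradicting the lower bilipschitz bound $|\chi(v)|\ge|v|/K$ for small nonzero $v$. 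Combining the two forces $p=q$.

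The main technical point is the rate comparison in the case $q\ge p^3$, where the partial sums defining $E_n$ themselves diverge geometrically. The argument still goes through only because the robust factor $p^{2n}$ separating the main term from the error is present for every ratio $q/p>1$, and this is the one estimate I would be most careful to verify explicitly.
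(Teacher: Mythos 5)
Your proof is correct and follows essentially the same route as the paper's: iterate the relation to get $\chi(v)=q^n\chi(v/p^n)+E_n$, use the two-sided bilipschitz bound $|\chi(v)|\asymp|v|$ (after noting $\chi(0)=0$), and derive a contradiction from growth rates in the two cases $q>p$ and $q<p$. The only cosmetic point is that when $q=p^3$ the partial sums give $|E_n|=O(n|v|^3)$ rather than $O\bigl(|v|^3\max(1,(q/p^3)^n)\bigr)$, which still loses to $(q/p)^n$, so your unified treatment of the regime $q\ge p^3$ (a case the paper explicitly defers) does go through.
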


\begin{proof}
We have $\chi(v) = q\chi(v/p)+ v^3\eta(v)$, where $\eta(v)$ is a bounded function for $v$ near $0$.
By induction it yields
$\chi(v) = q^n\chi(v/p^n) + v^3 \sum_{k=0}^{n-1} \eta(v/p^k) (q/p^3)^k$.
Hence, except for the special case $p^3=q$ that would be treated in a similar way, we have:
\begin{equation}
\left| \chi(v) - q^n\chi\left(\frac{v}{p^n}\right) \right| \le Cv^3 \frac{1-(q/p^3)^n}{1-q/p^3}.
\label{eq:chi}
\end{equation}
 
Let $K>0$ be a bilipschitz constant for $\chi$.
As $\chi(0)=0$ we have $K^{-1} < \frac{|\chi(v)|}{|v|} < K$ for all $v\neq0$.
In particular $K^{-1} < p^n\frac{|\chi(v/p^n)|}{|v|} < K$.

\emph{Case $p>q$.} Then we have $q^n\chi(v/p^n) \to 0$ as $n\to +\infty$.
At the limit, when $n\to +\infty$, inequality (\ref{eq:chi}) gives $|\chi(v)| \le C' v^3$,
which contradicts that $\chi$ is bilipschitz.

\emph{Case $p<q$.} Inequality (\ref{eq:chi}) gives
$$\left| \frac{p^n}{q^n}\chi(v) - p^n\chi\left(\frac{v}{p^n}\right) \right| \le C'v^3 \left( \frac{p^n}{q^n}- \frac{1}{p^{2n}} \right)$$
Fix $v\neq0$. As $n\to +\infty$, the term $p^n\chi(\frac{v}{p^n})$ does not tend towards $0$, it contradicts that all the other terms $\frac{p^n}{q^n}\chi(v)$, $\frac{p^n}{q^n}$ and $\frac{1}{p^{2n}}$ tends towards $0$. 

Conclusion: $p=q$.

\end{proof}

We completed the proof of theorem \ref{th:main} in the real setting.

\section{Proof in the complex case}

The proof in the complex case at infinity is an adaptation of the local proof of Henry and Parusi\'nski \cite{HP}.

\subsection{Notations}

\begin{itemize}
	\item Let $g : \Cc^2 \to \Cc$ be a polynomial map and $p = (x,y)$ be a point near the point at infinity $(0:1:0)$, that is to say $|y|\gg1$ and $|x|\ll |y|$.
	\item Fix $p_0$, let $c=g(p_0)$. Denote $B(p_0,\rho)$ the open ball centered at $p_0$ of radius $\rho$ and $X(p_0,\rho) = (g=c) \cap B(p_0,\rho)$.
	\item Fix $K>0$ and denote $\dist_{p_0,\rho,K}(p,q)$ the inner distance of 
	$p$ and $q$ supposed to be in the same connected component of $X(p_0,K\rho)$.
	\item Let
	$$\phi(p_0,K,\rho) = \sup \frac{\dist_{p_0,\rho,K}(p,q)}{\|p-q\|}$$
	be the ratio between the inner and outer distances.
	\item Denote 
	$$\psi(p_0,K,\rho) = \sup_{\rho'\le\rho} \phi(p_0,K,\rho').$$
	\item Finally let
	$$Y(\rho,K,A) = \{p \mid \psi(p,K,\rho) \ge A\}.$$
	be the set of points $p$ where the curvature of the curve $(g=c)$ is large.
	\item Let $\Phi : \Cc^2 \to \Cc^2$ be a bilipschitz homeomorphism at infinity such that $\tilde g \circ \Phi = g$. Let $L$ be a bilipschitz constant of $\Phi$. 
	\item Once $\Phi$ is fixed, we add a tilde to denote an object in then target space, for instance
	$$\tilde Y(\rho,K,A) = \{\tilde p \mid \psi(\tilde p,K,\rho) \ge A\}$$
	is the set of points $\tilde p$ in the target space where the curvature of the curve $(\tilde g=\tilde c)$ is large.
\end{itemize}

We have the following lemma saying that points with large curvature are sent to points of large curvature by a bilipschitz map:
\begin{lemma}[\cite{HP}, Lemma 2.1]
\label{lem:lem21}	
For $K \ge L^2$:
$$\tilde Y(L^{-1}\rho,K,AL^2) \subset \Phi(Y(\rho,K,A))	\subset \tilde Y(L\rho,K,AL^{-2}).$$
\end{lemma}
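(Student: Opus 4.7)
Both inclusions have the same flavor, so I will describe the strategy for the right-hand inclusion $\Phi(Y(\rho,K,A)) \subset \tilde Y(L\rho,K,AL^{-2})$ and note that the other follows by applying the same argument to the $L$-bilipschitz map $\Phi^{-1}$ (with the roles of $(\rho,A)$ replaced by $(L^{-1}\rho, AL^{2})$): a point $\tilde p \in \tilde Y(L^{-1}\rho,K,AL^2)$ is then pushed by $\Phi^{-1}$ into $Y(L\cdot L^{-1}\rho, K, AL^2\cdot L^{-2}) = Y(\rho,K,A)$, giving $\tilde p \in \Phi(Y(\rho,K,A))$.

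For the right-hand inclusion, fix $p \in Y(\rho,K,A)$ and set $\tilde p := \Phi(p)$. Unpacking $\psi(p,K,\rho) \ge A$, for every $\varepsilon > 0$ I can produce a scale $\rho' \le \rho$ and points $q_1, q_2 \in X(p,\rho')$ in the same connected component of $X(p,K\rho')$ with
$$\dist_{p,\rho',K}(q_1,q_2) \ge (A-\varepsilon)\|q_1-q_2\|.$$
Transport this witness by setting $\tilde q_i := \Phi(q_i)$ and $\tilde\rho' := L\rho'$; clearly $\tilde\rho' \le L\rho$. I then check that $(\tilde q_1, \tilde q_2)$ witnesses $\phi(\tilde p, K, \tilde\rho') \ge (A-\varepsilon)L^{-2}$, which by the sup-definition gives $\psi(\tilde p, K, L\rho) \ge AL^{-2}$ after letting $\varepsilon \to 0$.

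The two preliminary conditions are easy: $\|\tilde q_i - \tilde p\| \le L\|q_i - p\| \le L\rho' = \tilde\rho'$ puts $\tilde q_i \in \tilde X(\tilde p,\tilde\rho')$, and $\Phi$ sends any path from $q_1$ to $q_2$ in $X(p,K\rho')$ to a path on $(\tilde g = c) \cap B(\tilde p, LK\rho') = \tilde X(\tilde p, K\tilde\rho')$, which verifies the same-component condition. For the ratio, the denominator is immediate: $\|\tilde q_1 - \tilde q_2\| \le L\|q_1-q_2\|$. For the numerator, given any path $\tilde\gamma$ in $\tilde X(\tilde p, K\tilde\rho')$ from $\tilde q_1$ to $\tilde q_2$, its pullback $\Phi^{-1}(\tilde\gamma)$ is a path on $(g=c)$ from $q_1$ to $q_2$ lying in $B(p, LK\tilde\rho') = B(p, KL^2\rho')$; hence $\Phi^{-1}(\tilde\gamma) \subset X(p, KL^2\rho')$. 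Assuming we can show the resulting inner distance in this slightly larger source-ball is still bounded below by $\dist_{p,\rho',K}(q_1,q_2)$, we obtain
$$\operatorname{len}(\tilde\gamma) \ge L^{-1}\operatorname{len}(\Phi^{-1}(\tilde\gamma)) \ge L^{-1}(A-\varepsilon)\|q_1-q_2\| \ge (A-\varepsilon)L^{-2}\|\tilde q_1 - \tilde q_2\|,$$
as wanted.

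The main obstacle is precisely the scale-inflation from $K\rho'$ to $KL^2\rho'$ in the source. This is where the hypothesis $K \ge L^2$ is essential: combined with the supremum over $\rho'$ built into the definition of $\psi$, it lets one replace the witness scale by a smaller one so that, after the Lipschitz blow-up, the pulled-back path still sits inside a ball compatible with the original inner-distance estimate (morally, one shrinks $\rho'$ so that $KL^2\rho'$ lands inside $K\rho$, which is legitimate because the sup $\psi(p,K,\rho)$ aggregates all such scales). This is exactly the content of Lemma~2.1 of \cite{HP}, whose proof transfers verbatim to the present setting at infinity.
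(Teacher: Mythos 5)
The paper does not actually prove this statement: it is quoted verbatim (up to the change of setting) from \cite{HP}, Lemma~2.1, so there is no internal argument to compare yours against. Taken on its own terms, your proposal has the right skeleton: the reduction of the left-hand inclusion to the right-hand one via $\Phi^{-1}$ is correct, the bookkeeping of the two factors of $L$ (one from $\|\tilde q_1-\tilde q_2\|\le L\|q_1-q_2\|$, one from $\operatorname{len}(\tilde\gamma)\ge L^{-1}\operatorname{len}(\Phi^{-1}\tilde\gamma)$) is exactly what produces $AL^{-2}$, and you correctly locate the only delicate point. But that point is a genuine gap, and the patch you offer does not close it. The pulled-back path lives in $X(p,KL^2\rho')$, so what you control is the inner distance of $q_1,q_2$ realized in $X(p,KL^2\rho')$, which is the infimum over a \emph{larger} family of paths than the one appearing in $\dist_{p,\rho',K}(q_1,q_2)$ and is therefore \emph{smaller} in general: the bigger ball may contain shortcuts. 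Your hypothesis bounds the wrong infimum from below.

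The proposed remedy --- ``shrink $\rho'$ so that $KL^2\rho'$ lands inside $K\rho$, using the sup over scales in $\psi$'' --- does not work. The mismatch is between the two radii $K\rho'$ and $KL^2\rho'$ attached to the \emph{same} witness pair; replacing $\rho'$ by $L^{-2}\rho'$ rescales both radii by the same factor and leaves the ratio $L^2$ intact, while if you instead keep the path-ball at $K\rho'$ you are forced to take the target witness scale $\tilde\rho'=L^{-1}\rho'$, and then $\tilde q_i=\Phi(q_i)$ is only guaranteed to lie in $\tilde X(\tilde p,L\rho')$, not in $\tilde X(\tilde p,L^{-1}\rho')$, so the transported witnesses no longer qualify for $\phi(\tilde p,K,\tilde\rho')$. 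The supremum defining $\psi$ only asserts the existence of one good scale; it gives no lower bound on the distortion at the inflated scale. A genuine argument is needed here --- for instance, showing that a connecting arc of length comparable to $A\|q_1-q_2\|\le 2A\rho'$ cannot exit $B(p,K\rho')$ once the constants are suitably related, or whatever mechanism \cite{HP} actually uses --- and since the entire lemma is the assertion that the distortion invariant survives the scale inflation, this step cannot be left as ``exactly the content of Lemma~2.1 of \cite{HP}'': that is the statement being proved.
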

And a variant:
\begin{lemma}[\cite{HP}, Lemma 2.2]
\label{lem:lem22}	
Let $\delta>0$ and 	
$$Y(\delta,K,M,A) = \{p \mid \psi(p,M\|p\|^{-1+\delta},K) \ge A\}.$$
If $K \ge L^2$ then:
$$\tilde Y(\delta,K,ML^{-\delta},AL^2)  \subset  \Phi(Y(\delta,K,M,A)) \subset \tilde Y(\delta,K,ML^{+\delta},AL^{-2}).$$
\end{lemma}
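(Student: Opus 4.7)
The plan is to deduce Lemma \ref{lem:lem22} from Lemma \ref{lem:lem21} by applying the latter ``pointwise'', namely by treating the point-dependent radius $M\|p\|^{-1+\delta}$ as a particular choice of $\rho$ at each base point. The only genuinely new ingredient, beyond the proof of Lemma \ref{lem:lem21}, is the bilipschitz distortion of norms: since $L^{-1}\|p\|\le\|\Phi(p)\|\le L\|p\|$, the two radii $M\|p\|^{-1+\delta}$ and $M\|\Phi(p)\|^{-1+\delta}$ differ by at most a factor of $L^{|{-1+\delta}|}$, and this slack is absorbed into the constants $ML^{\pm\delta}$ of the statement.

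Concretely, for the left inclusion $\tilde Y(\delta,K,ML^{-\delta},AL^2)\subset\Phi(Y(\delta,K,M,A))$, I would take $\tilde p$ on the left, set $\tilde\rho_0 = ML^{-\delta}\|\tilde p\|^{-1+\delta}$, so that $\psi(\tilde p,K,\tilde\rho_0)\ge AL^2$. Writing $p=\Phi^{-1}(\tilde p)$ and $\rho_0 = L\tilde\rho_0 = ML^{1-\delta}\|\tilde p\|^{-1+\delta}$, Lemma \ref{lem:lem21} applied with the fixed radius $\rho_0$ yields
\[
\psi(p,K,\rho_0)\ge A.
\]
The final step is to compare $\rho_0$ with the desired radius $M\|p\|^{-1+\delta}$ at $p$: the bilipschitz control $\|\tilde p\|\in[L^{-1}\|p\|,L\|p\|]$ gives $(\|\tilde p\|/\|p\|)^{-1+\delta}\le L^{|{-1+\delta}|}$, hence $\rho_0$ is bounded above by $M\|p\|^{-1+\delta}$ up to the $L^{\pm\delta}$ slack already built into the statement. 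The monotonicity of $\psi(p,K,\cdot)$ in its last argument then promotes the inequality to $\psi(p,K,M\|p\|^{-1+\delta})\ge A$, i.e.\ $p\in Y(\delta,K,M,A)$. The right inclusion is proved symmetrically by running the same argument with $\Phi$ and $\Phi^{-1}$ interchanged.

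The main obstacle is the exponent bookkeeping: one must verify that the factor $L^{|{-1+\delta}|}$ arising from the distortion of $\|p\|^{-1+\delta}$, combined with the $L^{\pm 1}$ factor already produced by Lemma \ref{lem:lem21}, fits within the tolerance $L^{\pm\delta}$ allowed in the statement. Monotonicity of $\psi$ in its radius argument is crucial here, to absorb any residual slack in whichever regime of $\delta$ is at hand. Once these constants are pinned down, the geometric content is identical to that of Lemma \ref{lem:lem21}: a pair of witnesses $(\tilde q_1,\tilde q_2)$ realizing a large inner/outer ratio in the target pulls back via $\Phi^{-1}$ to a pair $(q_1,q_2)$ realizing a ratio that is at least $L^{-2}$ times as large on the source side, where the loss of $L^2$ is offset precisely by the $AL^2$ appearing in the left-hand side.
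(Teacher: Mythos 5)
Your overall strategy is the right one and is essentially what the paper intends: the paper gives no independent proof of this lemma, remarking only that after replacing the exponent $\|p\|^{1+\delta}$ of \cite{HP} by $\|p\|^{-1+\delta}$ ``the proof is the same as in \cite{HP}'', and that proof is precisely the pointwise application of Lemma \ref{lem:lem21} with $\rho=M\|p\|^{-1+\delta}$, combined with the norm distortion $L^{-1}\|p\|\le\|\Phi(p)\|\le L\|p\|$ and the monotonicity of $\psi$ in its radius argument. All three ingredients appear correctly in your write-up, and applying Lemma \ref{lem:lem21} with a point-dependent radius is legitimate since membership in $Y(\rho,K,A)$ is a pointwise condition.

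The gap is in the step you explicitly defer, and it does not close with the constants as printed. Take the right inclusion with $0<\delta<1$ (the regime actually used in Lemmas \ref{lem:lem33}--\ref{lem:lem36}). From $\psi(p,K,M\|p\|^{-1+\delta})\ge A$, Lemma \ref{lem:lem21} gives $\psi(\tilde p,K,LM\|p\|^{-1+\delta})\ge AL^{-2}$; since $-1+\delta<0$, the worst-case distortion is $\|p\|^{-1+\delta}\le L^{1-\delta}\|\tilde p\|^{-1+\delta}$, so monotonicity yields
$$\psi\bigl(\tilde p,K,ML^{2-\delta}\|\tilde p\|^{-1+\delta}\bigr)\ge AL^{-2},$$
i.e.\ $\tilde p\in\tilde Y(\delta,K,ML^{2-\delta},AL^{-2})$ --- not $\tilde Y(\delta,K,ML^{\delta},AL^{-2})$, which is a strictly smaller set since $L^{\delta}<L^{2-\delta}$ when $\delta<1\le L$. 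The left inclusion fails symmetrically: one only obtains $\tilde Y(\delta,K,ML^{-(2-\delta)},AL^2)\subset\Phi(Y(\delta,K,M,A))$. In short, the combined factor is $L^{1+|\delta-1|}=L^{2-\delta}$, not $L^{\delta}$ (in the local version of \cite{HP}, with exponent $1+\delta$, the analogous factor is $L^{2+\delta}$, which is presumably the source of the printed $L^{\pm\delta}$). This is a defect of the statement rather than of your method: since $M$ is a free parameter in every subsequent use of the lemma, the version with $L^{\pm(2-\delta)}$ serves exactly the same purpose. But a complete proof must either establish that corrected version or flag the discrepancy; as written, your assertion that the slack ``fits within the tolerance $L^{\pm\delta}$'' is the one claim that a careful reader cannot verify, because it is false for $0<\delta<1$.
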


Remarks:
\begin{itemize}
	\item There are two distinct uses of the norm:
	\begin{itemize}
		\item $\|p-q\|$: distance between two ``near'' points: a ``small'' number.
		\item $\|p\|$: distance to the origin: a ``large'' number. We will use it for $\frac{1}{\|p\|}$ in order to get a ``small'' number. 
	\end{itemize} 

	\item If we denote $\tilde p = \Phi(p)$, then the bilipschitz property implies:
	$L^{-1} \|p\| \le \| \tilde p \| \le L \|p\|$
	for some bilipschitz constant $L$, hence also: 
	$$L^{-1}\|p\|^{-1} \le \| \tilde p \|^{-1} \le L \|p\|^{-1}.$$

	\item Notice that in our definition of $Y(\delta,K,M,A)$ of lemma \ref{lem:lem22} there is a term in $\|p\|^{-1+\delta}$ while in \cite{HP} the term is $\|p\|^{1+\delta}$.
	After this modification, the proof is the same as in \cite{HP}.
	
	\item We will restrict ourselves to a neighborhood of the point at infinity $(0:1:0)$, in particular we may suppose $|y| \gg |x|$ so that 
	morally $\|p\| = \|(x,y)\| \simeq |y|$ (this is an equality in the case $\|\cdot\|=\|\cdot\|_\infty$).
\end{itemize}

Fix $s\in\Cc$ and denote $f_s(x,y) = x(x^2y^2-sxy-1)$.
Let us denote $U = \{(x,y) \mid |\partial_x f_s| <|\partial_y f_s| \}$.

\begin{lemma}[compare to \cite{HP}, Lemma 3.2]
\label{lem:lem32}	
Let $(x(t),y(t)) \in U$ with $y(t)=\frac1t$. Then for $s^2+3\neq0$:
$$x(t) = \gamma t + O(t^3) \quad \text{ and } \quad f_s(x(t),y(t)) = \gamma (\gamma^2-s\gamma-1)t + O(t^3),$$
with
$$\gamma = \alpha_s \text{ or } \gamma = \beta_s \text{ a solution of } 3z^2-2sz-1=0.$$
\end{lemma}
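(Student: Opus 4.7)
The plan is to exploit the factorization of $\partial_x f_s$ in the coordinate $z = xy$. Writing $y = 1/t$ and $x = zt$, one computes
\[
\partial_x f_s = 3(z-\alpha_s)(z-\beta_s), \qquad \partial_y f_s = z^2 t^2 (2z-s),
\]
so the defining inequality $|\partial_x f_s| < |\partial_y f_s|$ of $U$ translates into the single relation
\[
3\,|z-\alpha_s|\cdot|z-\beta_s| \,<\, |z|^2\, t^2\, |2z-s|,
\]
which drives the whole argument.

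First I would show that $z(t)$ remains bounded as $t\to 0$. Being near the point at infinity $(0{:}1{:}0)$ means $|x|/|y|=|z|\,t^2\to 0$, that is $|z|=o(t^{-2})$. If on the contrary $|z|\to\infty$, the leading terms of the inequality above reduce to $3|z|^2 \lesssim 2|z|^3 t^2$, forcing $|z|\gtrsim \tfrac{3}{2}\,t^{-2}$, which contradicts $|z|=o(t^{-2})$.

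Second, the hypothesis $s^2+3\neq 0$ guarantees $\alpha_s\neq\beta_s$. Since $z$ is bounded, so is $|2z-s|$, and the inequality gives $|z-\alpha_s|\cdot|z-\beta_s|=O(t^2)$. Continuity of $z(t)$ prevents it from oscillating between two roots separated by a fixed distance, so for $t$ small enough $z(t)$ stays near one of them, say $\gamma\in\{\alpha_s,\beta_s\}$, while $|z-\gamma'|$ is bounded below for the other root $\gamma'$. This forces $|z-\gamma|=O(t^2)$, equivalently $x(t)=\gamma t+O(t^3)$.

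Finally, substituting this expansion into $f_s(x,1/t)=x^3 t^{-2} - s x^2 t^{-1} - x$ and collecting powers of $t$ yields
\[
f_s(x(t),y(t)) \,=\, (\gamma^3 - s\gamma^2 - \gamma)\,t + O(t^3) \,=\, \gamma(\gamma^2 - s\gamma -1)\,t + O(t^3),
\]
as required. The main obstacle is the boundedness step for $z(t)$; once it is in hand, the rest is direct algebra.
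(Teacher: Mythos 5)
Your proof is correct and follows essentially the same route as the paper: pass to $u=xy$, use the inequality $|\partial_x f_s|<|\partial_y f_s|$ to show boundedness, identify the limit of $u$ as a root of $3z^2-2sz-1$, and extract the $O(t^3)$ error term. The only (harmless) difference is presentational: you factor $3z^2-2sz-1=3(z-\alpha_s)(z-\beta_s)$ and bound $|z-\gamma|$ directly using that $|z-\gamma'|$ stays bounded below when $\alpha_s\neq\beta_s$, whereas the paper expands $x(t)$ as a Puiseux series and matches coefficients to rule out exponents $r_1<3$ — your version makes the role of $s^2+3\neq0$ slightly more transparent.
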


In this section we now suppose $s^2+3\neq0$.

\begin{proof}
Let $u = xy$. On $U$ the inequality $|\partial_x f_s| <|\partial_y f_s|$
yields $|3u^2-2su-1| < |x|^2 |2u-s|$. 
In a neighborhood of the point at infinity $(0:1:0)$ we first prove that $|x(t)|$ is bounded as $t\to +\infty$. If this is not the case, then write
$x(t) = a_0 t^{r_0} + a_1t^{r_1}+\cdots$ with $r_i \in \Qq$, $r_i<r_{i+1}$ and here $r_0<0$. As $y(t)=1/t$, then $u(t) \sim a_0t^{r_ 0-1} \to +\infty$.
Then $|3u^2-2su-1| < |x|^2 |2u-s|$ implies $r_0 \le -1$ in contradiction with $\frac{x(t)}{y(t)} \to 0$.
Now, as $|x(t)|$ is bounded, inequality $|3u^2-2su-1| < |x|^2 |2u-s|$ implies that $|u(t)|$ is also bounded. 
Write again $x(t) = a_0 t^{r_0} + a_1t^{r_1}+\cdots$ and using that $u(t)$ is bounded gives $r_0 \ge 1$: 
$x(t) = a_0 t + a_1t^{r_1}+\cdots$ and $u(t) = a_0 + a_1t^{r_1-1}+\cdots$ ($a_0\in\Cc$).
We plug $u(t)$ in the inequality $|3u^2-2su-1| < |x|^2 |2u-s|$:
$$\left| 3(a_0+a_1t^{r_1-1}+\cdots)^2 -2s(a_0+a_1t^{r_1-1}+\cdots)-1 \right| = O(t^2).$$
It implies:                                
$$3a_0^2-2sa_0-1=0$$
and 
$$6a_0a_1t^{r_1-1}-2sa_1t^{r_1-1} = O(t^2).$$
We may suppose $a_1\neq0$ and we now prove $r_1\ge3$.
Otherwise $6a_0=2s$, that is to say $s=3a_0$, but $a_0$ is a solution of $3z^2-2sz-1=0$.
This is only possible if $s^2+3=0$. 
So that $x(t) = \gamma t + O(t^3)$ as required, where $\gamma$ is a solution of $3z^2-2sz-1=0$.
Then $f_s(x(t),y(t)) = \gamma (\gamma^2-s\gamma-1)t + O(t^3)$.
\end{proof}

\begin{lemma}[compare to \cite{HP}, Lemma 3.3]
\label{lem:lem33}	
Let $0<\delta<1$ and $C>0$.
On the set:
$$\{ p = (x,y) \mid \exists p_0 = (x_0,y_0) \in U, f_s(p)=f_s(p_0), |y-y_0| \le C |y_0|^{-1+\delta}	\},$$
if we denote $y(t) = \frac1t$, then
\begin{equation}
\label{eq:lem33-1}
x(t) = O(t)
\end{equation}
and 
\begin{equation}
\label{eq:lem33-2}
f_s(x(t),y(t)) = \gamma(\gamma^2-s\gamma-1)t + O(t^{2-\delta}).
\end{equation}
\end{lemma}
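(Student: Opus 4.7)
The plan is to transfer the information Lemma \ref{lem:lem32} gives about $p_0$ to the nearby point $p$ via the defining cubic of the level set. The key preliminary step is to convert closeness in $y$ into closeness in the parameter $t$.

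First, write $y_0 = 1/t_0$ and $y = 1/t$. The hypothesis $|y - y_0| \le C|y_0|^{-1+\delta}$ reads $|1/t - 1/t_0| \le C t_0^{1-\delta}$, that is $|t - t_0| \le C t_0^{1-\delta} \cdot t t_0$. Since $\delta < 1$ and $t_0 \to 0$, this forces $t \asymp t_0$, and plugging back yields $t = t_0 + O(t_0^{3-\delta})$. Next, apply Lemma \ref{lem:lem32} to $p_0 \in U$: there exists $\gamma \in \{\alpha_s,\beta_s\}$ such that $x_0 = \gamma t_0 + O(t_0^3)$ and
\[
c := f_s(p_0) = \gamma(\gamma^2 - s\gamma - 1)\,t_0 + O(t_0^3).
\]

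To establish \eqref{eq:lem33-1}, I use that $p$ lies on $(f_s = c)$, so $x(t)$ satisfies
\[
x^3/t^2 - s x^2/t - x = c, \qquad \text{i.e.,} \qquad x^3 - s t x^2 - t^2 x - c t^2 = 0.
\]
Setting $w = x/t$ and dividing by $t^3$, this becomes $w^3 - s w^2 - w = c/t$. From the previous step, $c/t = \gamma(\gamma^2 - s\gamma - 1) + O(t^{2-\delta})$, which is bounded as $t \to 0$; hence every root $w$ of this cubic is bounded, and so $x(t) = O(t)$.

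For \eqref{eq:lem33-2}, I simply rewrite $f_s(x(t), y(t)) = c$ in terms of $t$. Using $t_0 = t + O(t^{3-\delta})$:
\[
f_s(x(t), y(t)) = \gamma(\gamma^2 - s\gamma - 1)\,t_0 + O(t_0^3) = \gamma(\gamma^2 - s\gamma - 1)\,t + O(t^{3-\delta}),
\]
which is stronger than (and in particular implies) the stated $O(t^{2-\delta})$. The only real technical point is the $t$--$t_0$ comparison at the start; after that, the cubic equation and a direct substitution do all the work, and crucially one does \emph{not} need $p$ itself to lie in $U$ (which was the whole reason a new lemma, distinct from Lemma \ref{lem:lem32}, was needed).
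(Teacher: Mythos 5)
Your proof is correct, and its skeleton matches the paper's: both convert the hypothesis $|y-y_0|\le C|y_0|^{-1+\delta}$ into a comparison of the parameters $t$ and $t_0$, then push the conclusion of Lemma \ref{lem:lem32} for $p_0$ onto $p$ via $f_s(p)=f_s(p_0)$; your derivation of \eqref{eq:lem33-2} is identical to the paper's (your error term $O(t^{3-\delta})$ is in fact the sharp consequence of $|t_0-t|\le C|t|\,|t_0|^{2-\delta}$, whereas the paper records only the weaker $t_0=t+O(t^{2-\delta})$, which still suffices downstream). Where you genuinely diverge is in the proof of \eqref{eq:lem33-1}: the paper re-runs the Puiseux-expansion analysis of Lemma \ref{lem:lem32}, writing $x(t)=a_0t^{r_0}+\cdots$ and ruling out $r_0>1$ and $r_0<1$ by the asymptotics of $\frac{x}{t}\bigl(\frac{x^2}{t^2}-s\frac{x}{t}-1\bigr)$, while you observe that $w=x/t$ satisfies the monic cubic $w^3-sw^2-w=c/t$ with uniformly bounded right-hand side, so all roots are bounded by the Cauchy bound. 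Your route is more elementary and avoids assuming that $x(t)$ admits a Puiseux parameterization along the set in question; what it does not give is the identification of the leading coefficient $a_0$ as a root of $a_0(a_0^2-sa_0-1)=\gamma(\gamma^2-s\gamma-1)$, which the paper's computation yields in passing --- but since the lemma (and its later uses in Lemmas \ref{lem:cor34}--\ref{lem:lem36}) only require $x(t)=O(t)$, nothing is lost.
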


\begin{proof}
We denote $y(t) = \frac1t$ and $y(t_0)=\frac1{t_0}$.
As $|y-y_0| \le C |y_0|^{-1+\delta}$, we have $|\frac1t-\frac1{t_0}| \le C |t_0|^{1-\delta}$
hence $|t_0/t - 1| \le C|t_0|^{2-\delta}$ hence $t_0/t \to 1$, i.e $t \sim t_0$.
Then $|t_0/t - 1| \le C'|t|^{2-\delta}$ so that $t_0 = t + O(t^{2-\delta})$.

Now by hypothesis and by lemma \ref{lem:lem32},
$$f_s(x(t),y(t)) 
= f_s(x(t_0),y(t_0))
= \gamma (\gamma^2-s\gamma-1)t_0 + O(t_0^3)
= \gamma (\gamma^2-s\gamma-1)t + O(t^{2-\delta}).$$

So that
$$f_s(x(t),y(t)) = x(t) (x(t)^2y(t)^2-sx(t)y(t)-1) = \gamma (\gamma^2-s\gamma-1)t + O(t^{2-\delta}).$$

We start over the computations of lemma \ref{lem:lem32}. Set $x(t) = a_0 t^{r_0} + a_1t^{r_1}+\cdots$ and $y(t)=1/t$.
Then
\begin{equation}
\frac{x(t)}{t} \left(\frac{x(t)^2}{t^2}-s\frac{x(t)}{t}-1\right) = \gamma (\gamma^2-s\gamma-1) + O(t^{1-\delta})
\label{eq:asym}
\end{equation}
We cannot have $r_0>1$ since we would have $\frac{x(t)}{t} \to 0$ (as $t\to 0$) and the left-hand side of equation (\ref{eq:asym}) would also tends to $0$.
We cannot either have $r_0<1$, since we would have $\left|\frac{x(t)}{t}\right| \to +\infty$ and the left-hand side of equation (\ref{eq:asym}) would also tends to infinity. 
Then $r_0=1$ and $a_0(a_0^2-sa_0-1)=\gamma (\gamma^2-s\gamma-1)$, so that $x(t) = O(t)$.

\end{proof}

\begin{lemma}[compare to \cite{HP}, Corollary 3.4]
\label{lem:cor34}
Let $Y=Y(\delta,K,M,A) = \{p \mid \psi(p,M\|p\|^{-1+\delta},K) \ge A\}$
where $0<\delta<1$, $M>0$ and $A$, $K$ are sufficiently large constants. Then the formulas (\ref{eq:lem33-1}) and (\ref{eq:lem33-2}) holds 
for $(x(t),y(t)) \in Y$ with $y(t) = \frac1t$.	
\end{lemma}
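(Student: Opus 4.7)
The plan is to reduce lemma \ref{lem:cor34} to lemma \ref{lem:lem33}. For any $p = (x,y) \in Y$, I would exhibit a companion point $p_0 = (x_0,y_0) \in U$ lying on the same level curve $(f_s = f_s(p))$ and satisfying $|y - y_0| \le C |y_0|^{-1+\delta}$ for some $C>0$; then the asymptotics (\ref{eq:lem33-1}) and (\ref{eq:lem33-2}) are exactly what lemma \ref{lem:lem33} delivers, the value $\gamma$ being determined by the branch of $\Gamma_s$ that $p_0$ lies on.

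To produce $p_0$, I would use the defining property of $Y$. For $p \in Y$ there exists a scale $\rho' \le M\|p\|^{-1+\delta}$ at which $\phi(p, K, \rho') \ge A$: two points of the fiber component through $p$ inside $B(p, K\rho')$ have geodesic distance along the fiber at least $A$ times their Euclidean distance. Since $(f_s = f_s(p))$ is a smooth analytic curve near the point at infinity $(0:1:0)$, such a large inner-to-outer ratio at so small a scale is only possible if the fiber component turns back on itself inside $B(p, K\rho')$; for $A$ and $K$ chosen sufficiently large, this forces the component to contain a point $p_0$ where the tangent to $(f_s = f_s(p))$ is horizontal, i.e.\ $\partial_x f_s(p_0) = 0$. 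Such $p_0$ lies on the polar curve $\Gamma_s$, and the parametrization of $\Gamma_s$ from section \ref{ssec:preliminaries} makes it routine to check that $\partial_y f_s(p_0) \neq 0$ near infinity under the hypothesis $s^2+3\neq 0$, so $p_0 \in U$.

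By construction $f_s(p_0) = f_s(p)$ and $p_0 \in B(p, K\rho')$, whence $|y - y_0| \le K\rho' \le KM\|p\|^{-1+\delta}$. In the neighborhood of $(0:1:0)$ we have $\|p\| \sim |y| \sim |y_0|$, so for a suitably enlarged constant $C$ one obtains $|y - y_0| \le C |y_0|^{-1+\delta}$. The pair $(p, p_0)$ then satisfies the hypothesis of lemma \ref{lem:lem33}, and writing $y(t) = 1/t$ that lemma immediately delivers (\ref{eq:lem33-1}) and (\ref{eq:lem33-2}).

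The main obstacle is the quantitative geometric step above: that for $A$, $K$ large enough, $\phi(p, K, \rho') \ge A$ forces the existence of a polar point on the same fiber component inside $B(p, K\rho')$. This is the at-infinity analogue of \cite{HP}, Corollary 3.4, and the Henry--Parusi\'nski argument transports once one observes that in our region $\|p\| \sim |y|$, so that the at-infinity scale $\|p\|^{-1+\delta}$ plays the role of the local scale $\|p\|^{1+\delta}$ used in \cite{HP}. The remaining formulas (\ref{eq:lem33-1}) and (\ref{eq:lem33-2}) then follow without any further work.
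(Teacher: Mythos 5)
Your proposal follows essentially the same route as the paper: both reduce to Lemma \ref{lem:lem33} by producing, for each point of $Y$, a companion point of $U$ on the same fiber within distance $O(\|p\|^{-1+\delta})$, the existence of such a point being precisely the content of the Henry--Parusi\'nski argument (\cite{HP}, Corollary 3.4), which the paper simply cites and you sketch. One minor imprecision: the inner-versus-outer distance argument yields a point of the open set $U$ (where $|\partial_x f_s|<|\partial_y f_s|$) rather than an exact polar point of $\Gamma_s$, but since membership in $U$ is all that Lemma \ref{lem:lem33} requires, this does not affect the argument.
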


\begin{proof}
The proof is the same as in \cite{HP}: for $p_0=(x_0,y_0) \in Y$ there exists $p=(x,y) \in U$ such that 
$$\|p-p_0\| \le KM \|p_0\|^{-1+\delta}.$$
As $\frac12 |y_0| \le \|p_0\| \le 2 |y_0|$ (since $x_0 \le y_0$), it implies $|y-y_0|\le C |y_0|^{-1+\delta}$ and lemma \ref{lem:lem33} applies.
\end{proof}

\begin{lemma}[compare to \cite{HP}, Proposition 3.5]
\label{lem:prop35}
Let $Y=Y(\delta,K,M,A)$, where $0<\delta<1$, $M>0$ and $A$, $K$ are sufficiently large constants. Suppose that $p_1$ and $p_2$ are in $Y$ and there exists a $0<\delta_1<1$ such that $\| p_1 - p_2 \| \le \| p_1\|^{-1+\delta_1}$. Then for 
$\max \{\delta,\delta_1\} < \delta_2 < 1 $ and in a sufficiently small neighborhood of the point at infinity $(0:1:0)$:
$$\left| \frac{f_s(p_1)}{f_s(p_2)} - a \right| \le \| p_1 \|^{-1+\delta_2},$$
with 
$$a \in \left\{ 1, \frac{\alpha_s(\alpha_s^2-s\alpha_s-1)}{\beta_s(\beta_s^2-s\beta_s-1)}, \frac{\beta_s(\beta_s^2-s\beta_s-1)}{\alpha_s(\alpha_s^2-s\alpha_s-1)} \right\}.$$
\end{lemma}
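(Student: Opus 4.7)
The plan is to apply Lemma \ref{lem:cor34} to both $p_1$ and $p_2$, use the closeness hypothesis to compare their parameters, and then read off the ratio $f_s(p_1)/f_s(p_2)$ from the resulting asymptotic expansions. The argument is a direct infinity-version analogue of the local proof in \cite{HP}.

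First, I parametrize $p_i = (x(t_i),\, 1/t_i)$ for $i=1,2$ using the convention $y(t) = 1/t$ from the previous lemmas. Since $p_1, p_2 \in Y(\delta,K,M,A)$, Lemma \ref{lem:cor34} yields constants $\gamma_1, \gamma_2 \in \{\alpha_s, \beta_s\}$ with $x(t_i) = O(t_i)$ and
\[ f_s(p_i) = \gamma_i(\gamma_i^2 - s\gamma_i - 1)\, t_i + O(|t_i|^{2-\delta}). \]

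Next, I translate the hypothesis $\|p_1 - p_2\| \le \|p_1\|^{-1+\delta_1}$ into a relation between $t_1$ and $t_2$. In our neighborhood of the point at infinity $(0:1:0)$ we have $\|p_i\| \simeq |y_i| = 1/|t_i|$, so $|y_1 - y_2| \le \|p_1\|^{-1+\delta_1}$ becomes $|1/t_1 - 1/t_2| \le C|t_1|^{1-\delta_1}$. Since $t_1, t_2 \to 0$ with $t_2 \sim t_1$ (both points lie near the same point at infinity), this yields $|t_2 - t_1| = O(|t_1|^{3-\delta_1})$, hence $t_2/t_1 = 1 + O(|t_1|^{2-\delta_1})$.

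Now I divide numerator and denominator of $f_s(p_1)/f_s(p_2)$ by $t_1$:
\[
\frac{f_s(p_1)}{f_s(p_2)} = \frac{\gamma_1(\gamma_1^2 - s\gamma_1 - 1) + O(|t_1|^{1-\delta})}{\gamma_2(\gamma_2^2 - s\gamma_2 - 1)\,(t_2/t_1) + O(|t_1|^{1-\delta})}.
\]
Since $2 - \delta_1 > 1 - \delta$, the correction $t_2/t_1 - 1$ is absorbed in the $O(|t_1|^{1-\delta})$ error. Using $s^2+3 \neq 0$ (which guarantees $\gamma_i(\gamma_i^2 - s\gamma_i - 1) \neq 0$ via the same argument as in Lemma \ref{lem:lem32}), expanding the quotient gives
\[
\frac{f_s(p_1)}{f_s(p_2)} = \frac{\gamma_1(\gamma_1^2 - s\gamma_1 - 1)}{\gamma_2(\gamma_2^2 - s\gamma_2 - 1)} + O(|t_1|^{1-\delta}).
\]
With $|t_1| \simeq \|p_1\|^{-1}$ and $\delta_2 > \max\{\delta,\delta_1\}$, the error is bounded by $\|p_1\|^{-1+\delta_2}$ in a sufficiently small neighborhood of $(0:1:0)$. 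The leading constant
\[ a = \frac{\gamma_1(\gamma_1^2 - s\gamma_1 - 1)}{\gamma_2(\gamma_2^2 - s\gamma_2 - 1)} \]
takes exactly the three listed values according to the four possible combinations $(\gamma_1,\gamma_2) \in \{\alpha_s,\beta_s\}^2$: the two diagonal cases both yield $a = 1$, while the two off-diagonal cases yield the remaining two ratios.

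The main obstacle is only bookkeeping: ensuring Lemma \ref{lem:cor34} applies uniformly to both $p_1$ and $p_2$ (which is why $A$ and $K$ must be taken sufficiently large and why we restrict to a small enough neighborhood of $(0:1:0)$) and then comparing the various exponents of $|t_1|$ to confirm which error term dominates. No new estimate beyond Lemmas \ref{lem:lem32}--\ref{lem:cor34} is needed; the remainder is a direct expansion.
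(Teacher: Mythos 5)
Your proposal is correct and follows essentially the same route as the paper's proof: apply Lemma \ref{lem:cor34} to both points, convert the hypothesis $\|p_1-p_2\|\le\|p_1\|^{-1+\delta_1}$ into a comparison $t_2=t_1+O(|t_1|^{2-\delta_1})$ of the parameters, and expand the quotient of the two asymptotic expressions. Your slightly sharper estimate $t_2/t_1=1+O(|t_1|^{2-\delta_1})$ even makes the $\delta_1$-error subdominant, which only strengthens the stated bound.
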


\begin{proof}
Let $p_1 = (x_1(t),y_1(t))$ and $p_2=(x_2(t'),y_2(t'))$ be two points in $Y$.
Then by lemma \ref{lem:cor34} $$f_s(x_1(t),y_1(t))
=\gamma(\gamma^2-s\gamma-1)t + O(t^{2-\delta}),$$
$$f_s(x_2(t'),y_2(t'))
=\gamma'(\gamma'^2-s\gamma'-1)t' + O(t'^{2-\delta}),$$
where $\gamma$ and $\gamma'$ are in $\{\alpha_s,\beta_s\}$.

Now as $\| p_1 - p_2 \| \le \| p_1\|^{-1+\delta_1}$ it implies
$|y_1-y_2| \le 2|y_1|^{-1+\delta_1}$,  as in the proof of lemma \ref{lem:lem33} we get $t'=t+O(t^{2-\delta_1})$.
Whence 
$$f_s(x_2(t'),y_2(t')) = \gamma'(\gamma'^2-s\gamma'-1)t + O(t^{2-\delta_1}) + O(t^{2-\delta}).$$
Then
$$\frac{f_s(p_1)}{f_s(p_2)} = \frac{\gamma(\gamma^2-s\gamma-1)}{\gamma'(\gamma'^2-s\gamma'-1)}+O(t^{1-\delta_1})+O(t^{1-\delta}).$$

Then for $\delta_2>\max\{\delta,\delta_1\}$ with $\delta_2<1$ and in neighborhood of the point at infinity $(0:1:0)$ we get:
$$\left| \frac{f_s(p_1)}{f_s(p_2)} - a \right| \le \frac12|t|^{1-\delta_2} \le \| p_1 \|^{-1+\delta_2},$$
where $a=\gamma/\gamma'$.

\end{proof}

\begin{lemma}[compare to \cite{HP}, Lemma 3.6]
\label{lem:lem36}
Let $K$ and $A$ sufficiently large and $0<\delta<1$. Fix $s$ with $s^2+3\neq0$.
Then $Y = Y(\delta,K,M,A)$ is nonempty and contains the polar curve $\Gamma_s$.
Moreover all the limits	of $f_s(p_1)/f_s(p_2)$ given in lemma \ref{lem:prop35} can be obtained by taking $p_1$ and $p_2$ along the branches of $\Gamma_s$ associated to the point at infinity $(0:1:0)$.
\end{lemma}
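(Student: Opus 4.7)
The plan is to read off both claims directly from the parameterization $t\mapsto(\gamma t,1/t)$ of the branches of $\Gamma_s$ (with $\gamma\in\{\alpha_s,\beta_s\}$), together with the explicit formula for $f_s$ on these branches computed in Section~\ref{ssec:preliminaries}. The only nontrivial point is the inclusion $\Gamma_s\subset Y$; once this is established, the three limits appearing in Lemma~\ref{lem:prop35} are immediate from the computation of $f_s$ along $\Gamma_s$.

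For the inclusion $\Gamma_s\subset Y$, I fix a point $p_0=(\alpha_s t_0,1/t_0)$ on one branch of $\Gamma_s$ with $|t_0|$ small (so $p_0$ is near the point at infinity $(0:1:0)$) and analyze the local geometry of the level curve $X_0=(f_s=f_s(p_0))$ at $p_0$. A direct Taylor computation using $\partial_x f_s(p_0)=0$, $\partial_y f_s(p_0)=(2\alpha_s-s)\alpha_s^2\,t_0^2$ and $\partial_x^2 f_s(p_0)=2(3\alpha_s-s)/t_0$ (all nonzero when $s^2+3\neq0$) shows that with $(u,v)=(x-x_0,y-y_0)$ the curve $X_0$ is locally parameterized by
\[
v = V(u) = -\kappa u^2 + \cdots,\qquad \kappa = \frac{3\alpha_s-s}{(2\alpha_s-s)\alpha_s^2\,t_0^3},
\]
a complex parabola of very large curvature $|\kappa|\asymp|t_0|^{-3}$. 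I then pick a scale $h\asymp |t_0|^{2-\delta/2}$ and set $p_1=(x_0+h,y_0+V(h))$, $q_1=(x_0-h,y_0+V(-h))$; then $\|p_i-p_0\|\asymp|\kappa|h^2\asymp|t_0|^{1-\delta}$, so for $M$ large enough $p_1,q_1\in B(p_0,\rho')$ with $\rho'\le M\|p_0\|^{-1+\delta}$ (using $\|p_0\|\asymp1/|t_0|$), and they belong to a single parabolic arc component of $X(p_0,K\rho')$. The outer distance is $\|p_1-q_1\|\asymp h$, while a standard computation in the conformal metric $ds^2=(1+|V'(u)|^2)|du|^2$ on the Riemann surface $X_0$ shows that any path in this component joining $p_1$ and $q_1$ has length $\asymp|\kappa|h^2$. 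Hence the inner-to-outer ratio is $\asymp|\kappa|h\asymp|t_0|^{-1-\delta/2}$, exceeding any prescribed $A$ for $|t_0|$ sufficiently small. This gives $p_0\in Y$; the same argument works on the branch $(xy=\beta_s)$, so $\Gamma_s\subset Y$ in a neighborhood of $(0:1:0)$ and in particular $Y$ is nonempty. The delicate point is to balance the exponents of $|t_0|$ so that the ball condition $\rho'\le M\|p_0\|^{-1+\delta}$ and the lower bound $|\kappa|h\ge A$ hold simultaneously, which is what fixes the scale $h\asymp|t_0|^{2-\delta/2}$; this is the main technical step in the proof.

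For the three values of $a$, I take $p_1=(\gamma_1 t,1/t)$ and $p_2=(\gamma_2 t,1/t)$ on $\Gamma_s$ with the same $y$-coordinate, $\gamma_i\in\{\alpha_s,\beta_s\}$. Both points lie in $Y$ by what precedes, and $\|p_1-p_2\|=|\gamma_1-\gamma_2|\,|t|\le|t|^{1-\delta_1}\asymp\|p_1\|^{-1+\delta_1}$ for any $\delta_1>0$ as soon as $|t|$ is small, so the hypotheses of Lemma~\ref{lem:prop35} are satisfied. From Section~\ref{ssec:preliminaries},
\[
\frac{f_s(p_1)}{f_s(p_2)} = \frac{\gamma_1(\gamma_1^2-s\gamma_1-1)}{\gamma_2(\gamma_2^2-s\gamma_2-1)},
\]
so the three cases $\gamma_1=\gamma_2$, $(\gamma_1,\gamma_2)=(\alpha_s,\beta_s)$ and $(\gamma_1,\gamma_2)=(\beta_s,\alpha_s)$ produce exactly the three values $1$, $\alpha_s(\alpha_s^2-s\alpha_s-1)/(\beta_s(\beta_s^2-s\beta_s-1))$ and its reciprocal prescribed by Lemma~\ref{lem:prop35}, completing the proof.
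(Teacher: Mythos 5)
Your proof is correct in substance and reaches the same conclusion, but the key step --- showing that the branches of $\Gamma_s$ near $(0:1:0)$ lie in $Y$ --- is justified by a different mechanism than in the paper. The paper never writes down the local equation of the level curve: it views $\pi_c:(f_s=c)\to\Cc$, $(x,y)\mapsto y$, as a triple covering branched exactly at $\Gamma_s\cap(f_s=c)$, checks that the two branch points are far apart, and gets the inner-distance lower bound $\dist(p_1,p_2)\ge 2|y-y_0|=|y_0|^{-1+\delta}$ from the purely topological fact that a path joining the two ramifying sheets must project to a path reaching $y_0$ (combined with the crude outer bound $\|p_1-p_2\|\le C|t|$ from Lemma~\ref{lem:lem33}). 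You instead Taylor-expand $f_s$ at $p_0\in\Gamma_s$, exhibit the level curve as a graph $v=-\kappa u^2+\cdots$ with $|\kappa|\asymp|t_0|^{-3}$, and bound the inner distance from below by integrating the induced metric $\sqrt{1+|V'|^2}\,|du|$ along any path from $u=h$ to $u=-h$. The two arguments test the same pair of points at the same scale ($|u|\asymp|t_0|^{2-\delta/2}$ is exactly where $|V(u)|\asymp|y_0|^{-1+\delta}$), and both yield the same inner bound $|t_0|^{1-\delta}$; your explicit computation buys a sharper outer bound and hence a better blow-up rate for the ratio, while the paper's covering argument is softer and does not require controlling the error terms of the expansion. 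The second half of your proof (reading the three values of $a$ off the computation of $f_s$ on the parameterized branches from Section~\ref{ssec:preliminaries}) is exactly what the paper does.

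Two small points to tighten. First, with your choice $q_1=(x_0-h,\,y_0+V(-h))$ the even part of $V$ cancels in $V(h)-V(-h)$ but the cubic term does not, and one finds $|V(h)-V(-h)|\asymp|t_0|^{2-3\delta/2}\gg h$, so $\|p_1-q_1\|$ is \emph{not} $\asymp h$; the inner-to-outer ratio is then $\asymp|t_0|^{-1+\delta/2}$ rather than $|t_0|^{-1-\delta/2}$, which still tends to infinity, so nothing breaks --- but it is cleaner to take $q_1$ to be the second preimage of the same $y$-value $y_0+V(h)$ (as the paper does), which makes $\|p_1-q_1\|\asymp h$ exact. Second, your expansion needs $\partial_yf_s(p_0)=\alpha_s^2t_0^2(2\alpha_s-s)\neq0$, i.e.\ $s^2+4\neq0$; for $s^2=-4$ the polynomial is non-reduced and one polar branch consists of critical points, so the whole statement degenerates --- the paper's covering argument has the same implicit restriction, but you should state it.
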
	

\begin{proof}
Fix $\delta$ and $K$. 
Let $\pi_c : (f_s=c) \to \Cc$ be the projection $(x,y) \mapsto y$. It is a triple covering branched at the points $\Gamma_s \cap (f_s=c)$.
These points are of coordinates
$$(\alpha_{s}t,\frac1{t}) \quad \text{ and } \quad 
(\beta_{s}t',\frac1{t'}) \quad \text{ with } \quad 
f_s(\alpha_{s}t,\frac1{t})= f_s(\beta_{s}t',\frac1{t'}) = c.$$
As $f_s(\alpha_{s}t,\frac1{t})=\alpha_s t(\alpha_s^2-s\alpha_s-1)$ it implies 
$$t = \frac{c}{\alpha_s (\alpha_s^2-s\alpha_s-1)}
 \quad \text{ and similarly } \quad 
t' = \frac{c}{\beta_s (\beta_s^2-s\beta_s-1)}.$$
For $s^2+3\neq0$, $\alpha_s \neq \beta_s$ and it also implies $t \neq t'$  hence
$|y(t)-y(t')|$ is of order $y(t)$, that is to say two points of ramifications are far enough.
Let $p_0=(x_0,y_0)$ be a point of ramification of $\pi_c$. 
Let $\mathcal{V} = \{y \mid |y-y_0| \le \epsilon |y_0|\}$, with $\epsilon$ sufficiently small such that no other ramification point projects in $\mathcal{V}$.
For a sufficiently large $p_0$ (i.e.\ small $c$),
$X(p_0,KM\|p_0\|^{-1+\delta}) \subset \pi_c^{-1}(\mathcal{V})$. 
Now let $p=(x,y)$ such that:
$$|y-y_0| \le |y_0|^{-1+\delta},$$
then by lemma \ref{lem:lem33}, $x = O(\frac1y)$ whence
$$\|p-p_0\| \le 2\|p_0\|^{-1+\delta}.$$
Let $\mathcal{V}_\delta = \{y \mid |y-y_0| \le \epsilon |y_0|^{-1+\delta}\}$,
by the above inequality we get $\pi_c^{-1}(\mathcal{V}_\delta) \subset X(p_0,KM\|p_0\|^{-1+\delta})$.
We restrict the triple branched covering $\pi_c$ to a map $\tilde \pi_c$ from $\pi_c^{-1}(\mathcal{V}_\delta)$ composed by only two components of the triple cover.
Let $y \in \mathcal{V}_\delta$ such that $|y-y_0| = \frac12 |y_0|^{-1+\delta}$.
Let $p_1 = (x_1,y)$, $p_2=(x_2,y)$ be the two points of $\tilde \pi_c^{-1}(y)$.
These two points are in $\tilde \pi_c^{-1}(\mathcal{V}_\delta)$ which is a connected set.
Any curve $\gamma$ in $\tilde \pi_c^{-1}(\mathcal{V}_\delta)$ from $p_1$ to $p_2$ passes through $p_0$, hence the projection of $\gamma$ by $\tilde \pi_c$ passes through $y_0$.
Hence the inner distance (in $(f_s=c)$) of $p_1$ and $p_2$ is greater or equal than
$2|y-y_0|$, it yields:
$$\dist_{p_0,M\|p_0\|^{-1+\delta},K}(p_1,p_2) \ge 2|y-y_0| = |y_0|^{-1+\delta}= |t|^{1-\delta},$$
where we denote $y_0=\frac1t$.
By lemma \ref{lem:lem33} we have $x_1 = O(t)$ and $x_2 = O(t)$, so that
$$\|p_1-p_2\| \le C|t|.$$
Then
$$\frac{\dist_{p_0,M\|p_0\|^{-1+\delta},K}(p_1,p_2)}{\|p_1-p_2\|} \ge \frac{1}{C|t|^\delta} \xrightarrow[t \to 0]{} + \infty.$$
Then $\psi(p_0,M\|p_0\|^{-1+\delta},K) \to + \infty$, as $p_0$ tends to the point at infinity $(0:1:0)$. It means that the branch of $\Gamma_s$ near this point at infinity is included in $Y(\delta,K,M,A)$.

Finally we have already proved in subsection \ref{ssec:preliminaries} that the list of values $f_s(p_1)/f_s(p_2)$ on $\Gamma_s$ is the required one.
\end{proof}

We conclude by the proof of the theorem in the complex case.	
\begin{proof}[Proof of theorem \ref{th:main}']
Fix $s$. By lemma \ref{lem:lem22} the set $Y$ for $f_s$ is sent into a set	$\tilde Y$ for $f_{s'}$. The polar curve $\Gamma_s$ is included in $Y$ (lemma \ref{lem:lem36}) and on this polar curve
$f_s(p_1)/f_s(p_2)$ tends to a  $\frac{\alpha_s(\alpha_s^2-s\alpha_s-1)}{\beta_s(\beta_s^2-s\beta_s-1)}$ for instance (lemma \ref{lem:prop35}).
On the one hand $f_{s'}(\tilde p_1)/f_{s'}(\tilde p_2)$ tends to the same value, because the bilipschitz homeomorphism $\Phi$ sends the levels of $f_s$ to the levels of $f_{s'}$.
On the other hand $\tilde p_1$, $\tilde p_2$ are in $\tilde Y$ so that
$f_{s'}(\tilde p_1)/f_{s'}(\tilde p_2)$ is in 
$$\left\{ 1, \frac{\alpha_{s'}(\alpha_{s'}^2-s'\alpha_{s'}-1)}{\beta_{s'}(\beta_{s'}^2-s'\beta_{s'}-1)}, \frac{\beta_{s'}(\beta_{s'}^2-s'\beta_{s'}-1)}{\alpha_{s'}(\alpha_{s'}^2-s'\alpha_{s'}-1)} \right\}.$$
This is only possible for a finite set of values $s'$.
\end{proof}

\section{Topological equivalence}

To complete the complex part of theorem \ref{th:main} we prove the topological equivalence of any two polynomials.

\begin{lemma}
\label{th:topeq}
Consider the following family of polynomials in $\Cc[x,y]$: 
$$f_s(x,y) = x(x^2y^2 -sxy-1)$$
with $s^2+4\neq0$.
For any $s$ and $s'$ the polynomials $f_s$ and $f_{s'}$ are topologically equivalent.
\end{lemma}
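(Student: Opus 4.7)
The plan is to construct an explicit homeomorphism $\Phi \colon \Cc^2 \to \Cc^2$ satisfying $f_{s'} \circ \Phi = f_s$, which yields left-right topological equivalence (with $\Psi = \mathrm{id}$). The starting point is the factorization
$$f_s(x,y) = x \cdot Q_s(xy), \qquad Q_s(v) = v^2 - sv - 1 = (v-u_1)(v-u_2),$$
with $u_1 u_2 = -1$ and $u_1 + u_2 = s$; the hypothesis $s^2+4\neq 0$ is precisely $u_1 \neq u_2$. The crucial algebraic fact is that the constant term $Q_s(0) = -1$ is independent of $s$. Analogous notation $Q_{s'}, u_1', u_2'$ applies to $f_{s'}$.

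First I would fix a homeomorphism $\psi \colon \Cc \to \Cc$ which is the identity on some neighborhood of $0$ (and also outside some large disk), equal to the translation $w \mapsto w + (u_i'-u_i)$ on a small disk around each $u_i$, and extended to a global homeomorphism of $\Cc$ between these pieces. In particular $\psi(u_i) = u_i'$ and $\psi$ is holomorphic with derivative $1$ at $0, u_1, u_2$. Then define
$$h(w) = \frac{Q_s(w)}{Q_{s'}(\psi(w))}.$$
The numerator and denominator vanish to first order at exactly the same set $\{u_1, u_2\}$ (since $\psi$ maps this set to $\{u_1', u_2'\}$ bijectively), and the local holomorphic behavior of $\psi$ at $u_1, u_2$ ensures the ratio extends to a continuous, nowhere-zero function $h\colon \Cc \to \Cc^*$. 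Moreover $h(w) = 1$ wherever $\psi(w) = w$, in particular near $0$.

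I then define
$$\Phi(x,y) = \Bigl( x \, h(xy), \; \frac{\psi(xy)}{x \, h(xy)} \Bigr) \text{ for } x \neq 0, \qquad \Phi(0,y) = (0,y).$$
Since the product of the two coordinates of $\Phi(x,y)$ equals $\psi(xy)$, a direct computation gives
$$f_{s'}(\Phi(x,y)) = x\, h(xy) \cdot Q_{s'}(\psi(xy)) = x \cdot Q_s(xy) = f_s(x,y),$$
as wanted. Bijectivity of $\Phi$ is immediate from an explicit inverse of the same form, built from $\psi^{-1}$ and $1/(h \circ \psi^{-1})$.

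The main obstacle is the continuity of $\Phi$ at the axis $\{x = 0\}$. Here the choice $\psi = \mathrm{id}$ (hence $h \equiv 1$) in a neighborhood of the origin does the job: as $(x,y) \to (0, y_0)$ we have $xy \to 0$, so for $|xy|$ small enough $\Phi(x,y) = (x, y) \to (0, y_0) = \Phi(0, y_0)$. The algebraic coincidence that makes this work is $Q_s(0) = -1$ for every $s$, equivalently $u_1 u_2 = -1$: without this we could not normalize $h$ to equal $1$ near $0$, and the formulas for $\Phi$ would not glue continuously across $\{x = 0\}$. The resulting $\Phi$ is merely continuous and not bilipschitz, in agreement with the moduli result of Section~3.
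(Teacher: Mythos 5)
Your construction is correct in substance, but it takes a genuinely different route from the paper. The paper builds no homeomorphism at all: it invokes a global version of the L\^e--Ramanujam $\mu$-constant theorem (Theorem \ref{th:mucst}) and checks that along the family the degree, the set of irregular values ($\mathcal{B}_s=\{0\}$) and the Euler characteristic of the generic fiber ($\chi_s=1-\mu_s-\lambda_s=-1$, computed via Newton polygons at the two points at infinity) are all constant. You instead exploit the product structure $f_s=x\,Q_s(xy)$ to write down an explicit fiber-preserving homeomorphism with $\Psi=\mathrm{id}$; this proves the stronger right-topological equivalence, is self-contained, and makes transparent both the role of $s^2+4\neq0$ (simple roots of $Q_s$, so that $h$ extends continuously and without zeros across $xy=u_i$) and the role of $Q_s(0)=-1$ in gluing across the axis. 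The trade-off is that your method is wedded to the special shape $x\,Q_s(xy)$ and would not generalize, whereas the paper's invariant-counting argument is robust; on the other hand your argument avoids the (nontrivial) machinery behind Theorem \ref{th:mucst}.

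One local slip to fix: it is not true that $h\equiv1$ wherever $\psi=\mathrm{id}$; there $h(w)=Q_s(w)/Q_{s'}(w)$, which equals $1$ only at $w=0$ when $s\neq s'$, so $\Phi(x,y)\neq(x,y)$ for small nonzero $xy$. This does not damage the proof: for $|xy|$ small one has $\Phi(x,y)=\bigl(x\,h(xy),\,y/h(xy)\bigr)$, and since $h$ is continuous with $h(0)=Q_s(0)/Q_{s'}(0)=1$, this still tends to $(0,y_0)$ as $(x,y)\to(0,y_0)$; the same repair applies to $\Phi^{-1}$. You should also add a sentence justifying that the prescribed local data for $\psi$ (identity near $0$ and near infinity, translations on small disjoint disks around $u_1,u_2$) extends to a homeomorphism of $\Cc$ --- a compactly supported ambient isotopy of $\Cc\setminus\{0\}$ moving the ordered pair $(u_1,u_2)$ to $(u_1',u_2')$ does this --- but that is standard.
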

This family is similar to examples in \cite{Bo1} of polynomials that are topologically equivalent but not algebraically equivalent.
Recall that two polynomials $f,g : \Kk^n \to \Kk$ are \defi{topologically equivalent} if there exist a homeomorphism $\Phi : \Kk^n \to \Kk^n$ and a homeomorphism $\Psi : \Kk \to \Kk$ such that $g \circ \Phi = \Psi \circ f$. 
We will use the following result that is global version of Lê-Ramanujam $\mu$-constant theorem. See \cite{Bo2} for the two variables case and \cite{BT} for any number of variables.
\begin{theorem}
\label{th:mucst}
Let $\{f_s\}_{s \in [0,1]}$ be a continuous family of complex polynomials with isolated singularities (in the affine space and at infinity), with $n \neq 3$ variables.
Suppose that the following integers are constant w.r.t.\ the value of $s$:
\begin{itemize}
	\item $\deg f_s$, the degree,
	\item $\# \mathcal{B}_s$, the number of irregular values,
	\item $\chi(f_s=c_{\text{gen}})$, the Euler characteristic of a generic fiber.
\end{itemize}
Then $f_0$ and $f_1$ are topologically equivalent.
\end{theorem}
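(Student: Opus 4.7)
The plan is to trivialize the family topologically over the parameter interval. Viewing the family as a single map
$F:\Cc^n\times[0,1]\to\Cc\times[0,1]$, $(x,s)\mapsto(f_s(x),s)$,
the goal is to produce a stratified homeomorphism of $F$ to the product $f_0\times\mathrm{id}_{[0,1]}$; restriction to the endpoints then yields the desired pair $(\Phi,\Psi)$ with $f_1\circ\Phi=\Psi\circ f_0$. I would first track the bifurcation sets: the hypothesis that $\#\mathcal{B}_s=k$ is independent of $s$, together with upper semicontinuity of the number of irregular values in a continuous family of polynomials of constant degree with isolated singularities, forces the existence of $k$ continuous labels $s\mapsto c_{i,s}$ with no collisions and no creation/annihilation of irregular values along $[0,1]$.

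The trivialization is built in two regimes. Over the complement of the irregular values, I would simultaneously compactify: since $\deg f_s$ is constant, the graphs of the $f_s$ embed into a uniform family of projective hypersurfaces $\bar F:\mathcal{X}\to\Pp^1\times[0,1]$ in $\Pp^n\times\Pp^1\times[0,1]$, possibly after resolving indeterminacies. Choose a Whitney stratification of $\mathcal{X}$ adapted to the divisor at infinity, the (finite union of) affine critical loci, and the traces of the continuous curves $s\mapsto c_{i,s}$; Thom's first isotopy lemma, applied to the proper map $\bar F$ over an open interval of regular values, produces local topological trivializations of $F$ there. Near each irregular value $c_{i,s}$ the change of fibre topology is controlled by two local invariants, the affine Milnor number $\mu^{\mathrm{aff}}_{i,s}$ and a Milnor number at infinity $\lambda^{\infty}_{i,s}$. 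A global Euler--characteristic formula of Hà--L\^e / Suzuki type, valid under the isolated-singularity hypothesis,
\[
\chi(f_s=c_{\mathrm{gen}})=\chi_0(\deg f_s)-\sum_i\bigl(\mu^{\mathrm{aff}}_{i,s}+\lambda^{\infty}_{i,s}\bigr),
\]
combined with the joint constancy of $\deg f_s$, $\#\mathcal{B}_s=k$ and $\chi(f_s=c_{\mathrm{gen}})$, forces each $\mu^{\mathrm{aff}}_{i,s}$ and each $\lambda^{\infty}_{i,s}$ to be individually constant in $s$. The local L\^e--Ramanujam $\mu$-constant theorem (for $n=2$ see \cite{Bo2}, for $n\geq4$ see \cite{BT}) then supplies, around each affine singular point, a local topological trivialization of the family of germs across the irregular fibre; a parallel $\mu$-constant argument performed on the compactification at the divisor at infinity supplies one at infinity.

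The local trivializations from the regular regime and from neighbourhoods of each irregular value (both affine and at infinity) are patched by a partition of unity in the parameter $s$, refined so as to respect the stratification of the target, into a global stratified topological trivialization of $F$ over $[0,1]$; evaluating at $s=0$ and $s=1$ extracts $\Phi$ and $\Psi$. The hard part is the local step across irregular values, which is precisely where the exclusion $n\neq3$ is forced: the local L\^e--Ramanujam theorem fails in complex dimension $3$, because the Milnor fibre has real dimension $4$ and the underlying $s$-cobordism argument breaks down there. The other delicate point is to prevent vanishing cycles from migrating between affine singular points and the divisor at infinity as $s$ varies; the combined constancy of $\deg f_s$, $\#\mathcal{B}_s$ and $\chi(f_s=c_{\mathrm{gen}})$ is exactly tailored to enforce this rigidity.
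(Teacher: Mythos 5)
The paper does not actually prove Theorem \ref{th:mucst}: it is imported as a known result, with \cite{Bo2} for the two-variable case and \cite{BT} for general $n\neq 3$, so there is no internal proof to compare yours against. Your outline does follow the broad strategy of those references (compactify, control the atypical values, reduce to local $\mu$-constant statements, glue), but as a proof it has genuine gaps. The most serious one is your very first step: you invoke an ``upper semicontinuity of the number of irregular values'' to produce $k$ continuous, non-colliding branches $s\mapsto c_{i,s}$ with no creation or annihilation. No such semicontinuity holds on its own---atypical values can split, merge, appear from or disappear to infinity, and vanishing cycles can be exchanged between affine critical points and the divisor at infinity. In the cited references this non-splitting is a \emph{conclusion}, obtained by combining the constancy of $\chi(f_s=c_{\mathrm{gen}})$ (equivalently of the total number of vanishing cycles, via $\chi=1+(-1)^{n-1}(\mu_s+\lambda_s)$; your formula with a separate degree term $\chi_0(\deg f_s)$ is not the correct identity here) with the semicontinuity of the local invariants $\mu_{i,s}$, $\lambda_{i,s}$ and with the constancy of $\#\mathcal{B}_s$. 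Deriving the continuous branches first and the constancy of the local Milnor numbers afterwards, as you do, puts the argument in the wrong order and assumes away the main difficulty.

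Even granting constant local Milnor numbers, the local L\^e--Ramanujam theorem only yields constancy of the local topological type of each singular germ; upgrading this to a topological trivialization of the global map near an atypical fibre (including at infinity, where one must also control how the compactified family meets the hyperplane at infinity and the chosen resolution of indeterminacies) is a separate global step, carried out in \cite{Bo2}, \cite{BT} by an h-cobordism argument on the complement of the local Milnor balls---this is also where the restriction $n\neq3$ really enters, and note that the L\^e--Ramanujam statement is \emph{open}, not known to fail, for $n=3$. Finally, homeomorphisms cannot be glued by a partition of unity in $s$; what one actually uses is that topological equivalence is transitive, so local constancy over a finite cover of the compact interval $[0,1]$ suffices.
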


\begin{proof}[Proof of lemma \ref{th:topeq}]
	~
\begin{itemize}
	
	\item \textbf{Degree.} It is clear that the degree of the $f_s$ is independent of $s$.
	
	\item \textbf{Affine singularities.}
	We search for points $(x,y)$ where both derivatives vanish.
	$\partial_x f_s (x,y) = 3x^2y^2-2sxy-1$ and $\partial_y f_s(x,y) = x^2(2xy-s)$.
	If $x=0$ then $\partial_x f_s(x,y) \neq 0$.
	So that $\partial_y f_s(x,y)=0$ implies $2xy-s=0$. We plug $xy=s/2$ in  $\partial_x f_s (x,y)=0$ and get $s^2+4=0$.
	Notice that $s^2+4=0$ gives also the values where $f_s$ is not a reduced polynomial.	
	Conclusion: for $s^2+4\neq0$, the polynomials $f_s$ has no affine singularities (nor affine critical values), so that its global affine Milnor number is $\mu_s=0$.
	
	\item \textbf{Singularities at infinity.}
	The two points at infinity for this family are $P_1 = (0:1:0)$ and $P_2=(1:0:0)$.
	Let $F_s(x,y,z) = x(x^2y^2-sxyz^2-z^4) - cz^5$ be the homogenization of $f_s(x,y)-c$. 
	
	\begin{itemize}
		\item \textbf{Milnor number at $P_1$.}
		We localize $F_s$ at $P_1=(0:1:0)$ to get
		$g_s(x,z)=F_s(x,1,z)=x(x^2-sxz^2-z^4)-cz^5$.
		We compute the local Milnor of $g_s$ at $(0,0)$.
		For instance we may use the Newton polygon of $g_s$ and Kouchnirenko formula.
		We get, for any $s$ (with $s^2+4\neq0$) and depending on $c$:
		$$\mu(g_s)=8 \  \text{ if } c \neq 0 \quad \text{ and } \quad  \mu(g_s)=10 \ \text{ if } c = 0.$$
		Hence the value $0$ is an irregular value at infinity and the jump of Milnor number is $\lambda_{P_1} = 10-8 = 2$.
		
	\begin{center}
	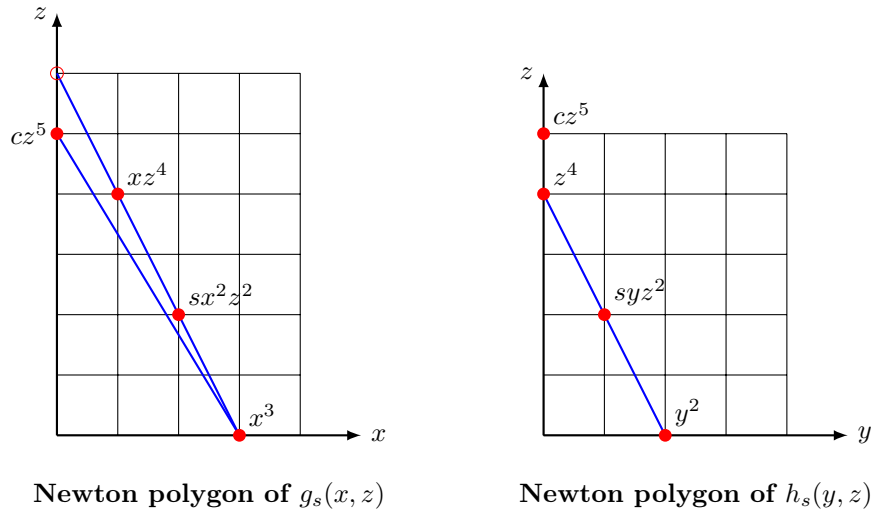
\begin{figure}[h]
	\myfigure{0.8}{\begin{tikzpicture}[scale=1]

\def\rayon{3 pt}

\begin{scope}
\draw (0,0) grid (4,6);
\draw[->,>=latex,thick] (0,0) -- (5,0) node[right] {$x$};
\draw[->,>=latex,thick] (0,0) -- (0,7) node[left] {$z$};

\draw[blue,thick] (0,5)--(3,0);
\draw[blue,thick] (0,6)--(3,0);

\fill[red] (3,0) circle (\rayon) node[black,above right] {$x^3$};
\fill[red] (2,2) circle (\rayon) node[black,above right] {$sx^2z^2$};
\fill[red] (1,4) circle (\rayon) node[black,above right] {$xz^4$}; 
\fill[red](0,5) circle (\rayon) node[black,left] {$cz^5$};
\draw[red](0,6) circle (\rayon); 

\node at (2.5,-1) {\bf Newton polygon of $g_s(x,z)$};
\end{scope}

\begin{scope}[xshift=8cm]
\draw (0,0) grid (4,5);
\draw[->,>=latex,thick] (0,0) -- (5,0) node[right] {$y$};
\draw[->,>=latex,thick] (0,0) -- (0,6) node[left] {$z$};

\draw[blue,thick] (0,4)--(2,0);

\fill[red] (2,0) circle (\rayon) node[black,above right] {$y^2$};
\fill[red] (1,2) circle (\rayon) node[black,above right] {$syz^2$};
\fill[red] (0,4) circle (\rayon) node[black,above right] {$z^4$}; 
\fill[red](0,5) circle (\rayon) node[black,above right] {$cz^5$}; 

\node at (2.5,-1) {\bf Newton polygon of $h_s(y,z)$};
\end{scope}

\end{tikzpicture}}	

	\caption{Computation of Milnor number at infinity.}
	\label{fig04}
	\end{figure}	
	\end{center}	

	   \item \textbf{Milnor number at $P_2$.}
	   At $P_2=(1:0:0)$ we get
	   $h_s(y,z)=F_s(1,y,z) = y^2-syz^2-z^4-cz^5$.
	   The local Milnor number of $h_s$ at $(0,0)$ is independent of $s$ and $c$: 
	   $$\mu(h_s)=3.$$
	   So that there is no irregular values at infinity for this point and $\lambda_{P_2} = 0$.
	   
	   \item Then the Milnor number at infinity is $\lambda_s = \lambda_{P_1}+\lambda_{P_2}=2$ and the only irregular value at infinity is $0$.
	\end{itemize}   
	  	
	\item \textbf{Conclusion.}
	For all $s$ the only irregular value is $0$: $\mathcal{B}_s = \{0\}$,
	the Euler characteristic of a generic fiber given by
	$\chi_s = 1 - \mu_s - \lambda_s = -1$ is also constant.
	Then by theorem \ref{th:mucst} any $f_s$ and $f_{s'}$ are topologically equivalent.

\end{itemize}	
	
\end{proof}


\bibliographystyle{plain}
\bibliography{globi.bib}

\begin{thebibliography}{1}

\bibitem{Bo1}
Arnaud Bodin.
\newblock Classification of polynomials from {${\Bbb C}^2$} to {$\Bbb C$} with
  one critical value.
\newblock {\em Math. Z.}, 242(2):303--322, 2002.

\bibitem{Bo2}
Arnaud Bodin.
\newblock Invariance of {M}ilnor numbers and topology of complex polynomials.
\newblock {\em Comment. Math. Helv.}, 78(1):134--152, 2003.

\bibitem{BT}
Arnaud Bodin and Mihai Tib\u{a}r.
\newblock Topological equivalence of complex polynomials.
\newblock {\em Adv. Math.}, 199(1):136--150, 2006.

\bibitem{Fu}
Takuo Fukuda.
\newblock Types topologiques des polyn\^{o}mes.
\newblock {\em Inst. Hautes \'{E}tudes Sci. Publ. Math.}, (46):87--106, 1976.

\bibitem{HP}
Jean-Pierre Henry and Adam Parusi\'{n}ski.
\newblock Existence of moduli for bi-{L}ipschitz equivalence of analytic
  functions.
\newblock {\em Compositio Math.}, 136(2):217--235, 2003.

\end{thebibliography}

\end{document}